\title{Real hyperbolic representations of   $PU(1,n)$ }
\author{Gonzalo Ruiz Stolowicz\thanks{EGG, Institute of Mathematics, EPFL, gonzalo.ruizstolowicz@epfl.ch}}
\numberwithin{equation}{section}
\newtheorem{teo}{Theorem}[section]
\newtheorem{lem}[teo]{Lemma}
\newtheorem{prop}[teo]{Proposition}
\newtheorem*{teosin}{Theorem}
\theoremstyle{definition}
\newcommand{\N}{\mathbf N}
\newcommand{\R}{\mathbf R}
\newcommand{\C}{\mathbf C}
\newcommand{\hn}{{\mathbf H}^n}
\newcommand{\hi}{{\mathbf H}^\infty}
\newcommand{\iso}{\text{Isom}(\hi_\R)}
\def\keywords{\xdef\@thefnmark{}\@footnotetext}
\begin{document}
	
	\maketitle
	    \keywords{2020 \emph{Mathematics Subject Classification.} 	51F99;  	20C99.}%
	\keywords{\emph{Key words and phrases.} Hyperbolic spaces, representation theory.}%
	
	\begin{abstract}
	It is shown that $PU(1,n)$, for $n\geq2$, does not admit non-elementary representations into the group of isometries of an infinite-dimensional real hyperbolic space. 
	\end{abstract}
	
	\section*{Introduction}
	The principal  contribution of this article is the following. 
	\begin{teosin}
		
		If $n\geq2$, the group of holomorphic isometries of $\hn_\C$,  the  complex  hyperbolic space of dimension n, does not admit non-elementary representations into the group of isometries of $\hi_\R $, a separable  infinite-dimensional  real hyperbolic space. 
	\end{teosin}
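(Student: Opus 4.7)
The plan is to translate the problem into one about a positive-definite kernel on the Gelfand pair $(G,K)$, with $G=PU(1,n)$ and $K$ a maximal compact subgroup, and then to apply spherical harmonic analysis on $G$.

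First, since $\hi_\R$ is a complete $\mathrm{CAT}(-1)$ space and $K$ is compact, $\rho(K)$ fixes a point $o\in\hi_\R$ (the circumcenter of a $\rho(K)$-orbit). Then
\[
\phi(g):=\cosh d(\rho(g)o,\,o)
\]
is a $K$-biinvariant function with $\phi\geq 1$ and $\phi(e)=1$, and non-elementarity of $\rho$ forces $\phi$ to be unbounded (after separately discarding the degenerate cases of a fixed point at infinity or an invariant geodesic). Passing to the Lorentz (hyperboloid) model $\hi_\R\subset\mathcal L$, where $\mathcal L$ is a real Hilbert space of signature $(1,\infty)$, and writing $\rho(g)o=\phi(g)\,o+v_g$ with $v_g\in o^{\perp}$ (equipped with the induced positive-definite Hilbert structure), the isometry property of $\rho$ yields
\[
\phi(g)\phi(h)-\phi(h^{-1}g)\;=\;\langle v_g,\,v_h\rangle,
\]
so the left-hand side is a positive-semidefinite kernel on $G\times G$. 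The task thus reduces to ruling out the existence of any unbounded $K$-biinvariant $\phi:G\to[1,\infty)$ with $\phi(e)=1$ satisfying this positivity.

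The heart of the argument is then a spherical-analysis computation for $(G,K)$. Because $(G,K)$ is a rank-one Gelfand pair, $\phi$ admits a spectral decomposition over the spherical unitary dual of $G$ (principal series, complementary series, and the trivial representation), and the positivity of the two-variable kernel $\phi(g)\phi(h)-\phi(h^{-1}g)$ translates into a constraint on the spectral measure. This is the step where $n\geq 2$ must enter: when $n=1$ the isomorphism $PU(1,1)\simeq PSL_2(\R)$ yields the natural non-elementary action on $\mathbf H^2_\R\subset\hi_\R$, and the complementary series supplies a continuous family of non-trivial examples, whereas for $n\geq 2$ I expect the combined constraints to force the spectral measure to be concentrated at the trivial representation, contradicting unboundedness of $\phi$.

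The principal obstacle is executing this last step cleanly. A natural technical device is to observe that the $G$-invariant closed span of $\{v_g\}$ inside $o^{\perp}$ carries a genuine orthogonal representation of $G$, which non-elementarity forces to be non-trivial, and which must then sit inside the classified unitary dual of $PU(1,n)$; the extra requirement of being realized as a "$\sinh$-field" over a $\cosh$-growing $\phi$ is what should rule out all candidates for $n\geq 2$. Concretely, this should amount to showing that no rescaling of the Bergman metric on $\hn_\C$ admits an isometric $G$-equivariant embedding into $\hi_\R$, reflecting the four-point-type incompatibility between the pinched complex hyperbolic curvature structure and that of a constant-curvature real hyperbolic space.
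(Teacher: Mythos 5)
Your reduction to a $K$-biinvariant function $\phi(g)=\cosh d(\rho(g)o,o)$ with the positivity $\phi(g)\phi(h)-\phi(h^{-1}g)=\langle v_g,v_h\rangle$ is a correct and natural reformulation (it is essentially the ``kernel of hyperbolic type'' framework of \cite{monod2018notes}, one of the two papers this article builds on), and the preliminary steps (circumcenter for $\rho(K)$, unboundedness of $\phi$ from non-elementarity after discarding fixed boundary points and invariant geodesics) are fine. But the proposal has a genuine gap exactly where you locate ``the heart of the argument'': you never prove that for $n\geq 2$ no unbounded $K$-biinvariant function with this positivity exists, you only state that you \emph{expect} the spectral constraints to force the spectral measure onto the trivial representation. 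That statement \emph{is} the theorem, in equivalent form; everything preceding it is routine. Moreover, the intermediate claim that $\phi$ ``admits a spectral decomposition over the spherical unitary dual'' is itself unjustified: $\phi$ is unbounded, hence neither positive-definite nor in $L^2$, so neither Godement's theorem nor the spherical Plancherel formula applies to it directly, and the positivity of the two-variable kernel does not translate in any straightforward way into a constraint on a ``spectral measure of $\phi$''. One would instead have to work with the representation of $G$ on the closed span of $\{v_g\}$ in $o^{\perp}$ and control how the hyperbolic (non-unitary, signature $(1,\infty)$) part interacts with it; carrying this out is delicate, and the warning sign is that exotic non-elementary representations of $PU(1,n)$ \emph{do} exist on the infinite-dimensional \emph{complex} hyperbolic space (Theorem 1.15 in \cite{monod2018notes}), so any successful spectral argument must detect the real-versus-complex distinction in the target, not merely the representation theory of $PU(1,n)$. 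Your closing reduction to ``no rescaling of the Bergman metric embeds isometrically and equivariantly'' is also not an equivalent statement: an equivariant map to $\hi_\R$ need not be a homothety onto its image.

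For contrast, the paper avoids harmonic analysis entirely. It restricts $\rho$ to two non-commensurable uniform lattices $\Gamma_1,\Gamma_2$, produces $\Gamma_i$-equivariant Lipschitz harmonic maps $u_i:\hn_\C\to\hi_\R$ via Korevaar--Schoen, shows $d(u_1,u_2)$ is bounded and then, by a Li--Wang type subharmonicity argument, that $u_1=u_2$; since $\Gamma_1\cup\Gamma_2$ generates a dense subgroup, the common map $u$ is $Isom_\C(\hn_\C)$-equivariant. Sampson's rank bound (real rank $\leq 2$, following \cite{pydelzant}) forces $du_x$ to have nontrivial kernel, and transitivity of $Stab(x)$ on spheres in $T_x\hn_\C$ then forces $u$ to be constant, contradicting non-elementarity. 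This is where $n\geq 2$ and the complex structure genuinely enter. If you want to pursue your route you must actually execute the spectral step; as written, the proposal reformulates the problem but does not solve it.
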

 Contrary to the finite-dimensional case, this statement is not an
instance of a general principle such as the Mostow-Karpelevich theorem.
Indeed, there do exist exotic non-elementary representations of the group of holomorphic isometries of $\hn_\C$, for every $n\geq1$, on the
infinite-dimensional complex hyperbolic space (see for example Theorem 1.15 in \cite{monod2018notes})  and likewise
from real hyperbolic to real hyperbolic (see for example Theorem B in \cite{monod2014exotic}). 

	The main idea used to prove  this appears in     \cite{CarlsonyToledo} and \cite{pydelzant}: the existence of smooth  harmonic maps  $\hn_\C\rightarrow \hi_\R$ associated to a uniform lattice and a non-elementary representation  of the group of isometries of the domain, together with the strong restrictions on the rank of such maps (see \cite{sampson}).
	
  	This paper is a follow up of \cite{monod2018notes} and  \cite{monod2014exotic}. In the first paper  the author studies general representations of groups into groups of isometries of hyperbolic spaces. In the second one, among other results, the authors classify the non-elementary representations of $PO(n,1)$ into the group of isometries of an infinite-dimensional real hyperbolic space. 
  	 In the process of trying to unify the two different perspectives of the aforementioned articles, the main question  addressed here arises naturally.

	\section{Preliminaries and notations.}	
	Suppose $\mathbf{F}=\R,\C$ and let $B$ be a non-degenerate bilinear form, $\mathbf{F}$-linear in the first argument and $\mathbf{F}$-antilinear in the second, defined on $H$, a Hilbert space  over $\mathbf{F}$.  Following \cite{burger2005equivariant}, the form $B$  is called \textit{strongly non-degenerate of signature} $(1,n)$, 
	with $n\in\N\cup\infty$, if $H$ admits a $B$-orthogonal decomposition $\mathbf{F}\oplus H'$, with $\dim_\mathbf{F} (H') =n$ and   where $B$ restricted to $\mathbf{F}$ is the usual inner product  and 
	$\left(H',-B|_{H'}\right)$ is a Hilbert space.  
	
	Let $(H,B)$ be a Hilbert space over $\mathbf{F}$ and let $B$ be a strongly non-degenerate bilinear form of signature $(1,n)$. For  $v\in H$ , define $[v]=\mathbf{F}v$. The $n$-$dimensional$ \textit{hyperbolic space over }$\mathbf{F}$,  associated to $(H,B)$, is defined as 
	$$\hn_{\mathbf{F}}=\{[v]\mid  B(v,v)>0\},$$ provided with the metric, 
	$$\cosh(d([v],[w]))=\frac{|B(v,w)|}{B(v,v)^{1/2}B(w,w)^{1/2}}.$$
	For   further reading about hyperbolic spaces of infinite dimensions see \cite{das2017geometry} and \cite{burger2005equivariant}. 
	
	Let $X$ be a metric space. Given three points $x,y,z\in X$  define the \textit{Gromov product} of $y$ and $z$ with respect to  $x$ as,   $$(y,z)_x=\frac{1}{2}\left(d(y,x)+d(z,x)-d(y,z)\right).$$ 
	A sequence  $(x_i)$ in $X$ is called a \textit{Gromov sequence} if for $z_0$ a (any) base point,  $$\lim_{n,m\to\infty}(x_n,x_m)_{z_0}=\infty.$$
	Two Gromov sequences, $(x_i)$ and $(y_i)$, are called \textit{equivalent} if for $z_0$ a (any) base point, 
	$$\lim_{n,m\to\infty}(x_n,y_m)_{z_0}=\infty.$$
	
	The relation defined above in the set of Gromov sequences is an equivalence relation. Denote $\partial_g X$ the set of equivalence classes of Gromov sequences in $X$. The set $\partial_g X$ will be called the \textit{boundary at infinity} of X.  
	
	In this work CAT(-1) spaces will always be considered complete. For definitions and an extensive study of these spaces and the definition of Gromov hyperbolicity see \cite{bridson2013metric}. 
	
	Every CAT(-1) space is hyperbolic in the sense of Gromov (Proposition 3.3.4 in \cite{das2017geometry}). Therefore there  are two natural ways to define and topologize a  {boundary at infinity} for a complete CAT(-1) space. The first one is considering $X$ as a Gromov hyperbolic space and taking $\partial_gX$. The second is considering $X$ as a CAT(0) space and defining the boundary at infinity as the set of equivalence classes of asymptotic geodesic rays.   It is a classical  result  that for a CAT(-1) space these two notions are equivalent. A sketch of  proof will be  given later due to the author's lack of knowledge of a reference in the literature. 
	
	This is  Lemma 3.4.10 in  \cite{das2017geometry}. 
	\begin{lem}\label{gromovproduct}
		Let $X$ is a $CAT(-1)$ space and suppose 
		$\xi,\eta\in\partial_gX$ and $z,w\in X$. If  $(x_i)\in\xi$ and $(y_i)\in\eta$, the limits 
		$$(\xi,\eta)_z=\lim_{n,m\to\infty}(x_n,y_m)_z$$
		and $$(\xi,w)_z=\lim_{n\to\infty}(x_n,w)_z$$ exist and do not depend on the choice of representatives. \end{lem}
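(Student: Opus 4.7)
The plan is to exploit the strong hyperbolicity of CAT(-1) spaces. Specifically, by Bourdon's inequality, for any points $x, y, u, z$ in a CAT(-1) space $X$ one has
$$e^{-(x,y)_z} \leq e^{-(x,u)_z} + e^{-(u,y)_z},$$
so that $d_z(x,y) := e^{-(x,y)_z}$ satisfies the triangle inequality. This sharpening of the usual Gromov four-point condition is the main technical ingredient; its proof reduces to an explicit identity on $\mathbf H^2_\R$ via CAT(-1) comparison triangles, and is the step most removed from the present setting---everything else is formal manipulation.

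For the limit $(\xi, w)_z$, first observe that $(x, w)_z \leq d(w, z)$ for every $x$ (by the triangle inequality), so $\bigl((x_n, w)_z\bigr)$ is bounded above and $\bigl(e^{-(x_n, w)_z}\bigr)$ is bounded below by $e^{-d(w,z)} > 0$. Applying Bourdon's inequality with $(x, y, u) = (x_n, w, x_m)$ yields
$$e^{-(x_n, w)_z} \leq e^{-(x_n, x_m)_z} + e^{-(x_m, w)_z},$$
and symmetrically in $n, m$. Since $(x_n)$ is a Gromov sequence, $e^{-(x_n, x_m)_z} \to 0$, so $\bigl(e^{-(x_n, w)_z}\bigr)_n$ is Cauchy; combined with the positive lower bound, $(x_n, w)_z$ converges to a finite real limit. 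Independence of the representative follows by applying the same inequality to the mixed triple $(x_n, w, x_m')$ for $(x_m') \in \xi$ another representative: the cross term $e^{-(x_n, x_m')_z} \to 0$ by equivalence of Gromov sequences, forcing the two limits to agree.

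For $(\xi, \eta)_z$, iterate the inequality twice to obtain
$$e^{-(x_n, y_m)_z} \leq e^{-(x_n, x_{n'})_z} + e^{-(x_{n'}, y_{m'})_z} + e^{-(y_{m'}, y_m)_z},$$
together with its symmetric counterpart. For all indices beyond any given threshold, the two outer summands are arbitrarily small by the Gromov hypothesis on each sequence, so $\bigl(e^{-(x_n, y_m)_z}\bigr)$ is Cauchy along the double limit. The limiting value lies in $[0, 1]$; the value $0$, corresponding to $(\xi, \eta)_z = \infty$, is permitted and occurs exactly when $\xi = \eta$. Independence of both representatives is handled by the same cross-term argument as above.
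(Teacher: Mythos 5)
The paper offers no proof of this lemma at all---it is quoted verbatim as Lemma 3.4.10 of \cite{das2017geometry}---so your argument has to be judged on its own merits, and it holds up. The one non-formal ingredient, the four-point inequality $e^{-(x,y)_z}\leq e^{-(x,u)_z}+e^{-(u,y)_z}$ for arbitrary points of a CAT($-1$) space, is a genuine theorem (Bourdon proved the boundary version; the interior four-point version is the ``strong hyperbolicity with parameter $1$'' of Nica--\v{S}pakula, and is in fact essentially how the cited source sets things up as well). You are right to flag it as the crucial step: the generic $\delta$-hyperbolic inequality $(x,y)_z\geq\min\{(x,u)_z,(u,y)_z\}-\delta$ would only pin the limits down to within $2\delta$, which is why the sharp, constant-free exponential form is indispensable here, and the argument would survive unchanged if the correct parameter were some $\epsilon_0\in(0,1]$ rather than $1$. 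The remaining steps check out: the bound $(x,w)_z\leq d(w,z)$ gives the positive lower bound $e^{-d(w,z)}$ that guarantees a finite limit for $(\xi,w)_z$; the two symmetric applications of the inequality give $\left|e^{-(x_n,y_m)_z}-e^{-(x_{n'},y_{m'})_z}\right|\leq e^{-(x_n,x_{n'})_z}+e^{-(y_m,y_{m'})_z}$, which is exactly the Cauchy criterion for the double limit; and the cross-term argument for independence of representatives is correct, using equivalence of Gromov sequences where needed. The only point worth making explicit is that the limit $(\xi,\eta)_z$ must be read in $[0,\infty]$, with the value $\infty$ occurring precisely when $\xi=\eta$, as you note.
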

	
Define $\mathcal{T}_g$ as the unique topology on $X\cup \partial_gX$ such that for $S\subset X\cup \partial_gX$, $S$ is open if, and only if, 
	$S\cap X$ is open for the metric topology and for every $\xi\in S\cap\partial_gX$, there exists $t\geq0$ such that $N_t(\xi)\subset S$, where 
	$$N_t(\xi)=\{y\in X\cup \partial_gX\mid (y,\xi)_{x_0}>t\}.$$
	
	The following is Lemma 3.4.22 in \cite{das2017geometry}. 
	\begin{prop}\label{continuidaddelproductodegromov}
		Let $X$ be a CAT(-1) space. Suppose $(z_n)$ is a sequence in $X$ and  suppose $(x_n)$ and $(y_n)$ are sequences in $X\cup\partial_gX$ converging with the topology $\mathcal{T}_g$ to $z\in X$ and $x,y\in X\cup\partial_gX$, respectively. Therefore  
		$$\lim_{n\to\infty}(x_n,y_n)_{z_n}=(x,y)_z.$$
	\end{prop}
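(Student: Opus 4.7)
The plan is to reduce to a fixed base point and then identify $\lim_n (x_n, y_n)_z$ with $(x, y)_z$ by constructing approximating Gromov sequences for $x$ and $y$, using the Gromov four-point inequality together with Lemma~\ref{gromovproduct}.

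First I would establish continuity in the base point alone. For fixed $a, b \in X \cup \partial_g X$ and $z, z' \in X$, the estimate $|(a, b)_z - (a, b)_{z'}| \leq d(z, z')$ follows directly from the defining formula when $a, b \in X$, and persists for boundary arguments by passing to the limit along representative Gromov sequences using Lemma~\ref{gromovproduct}. This reduces the problem to showing $(x_n, y_n)_z \to (x, y)_z$ with $z$ fixed.

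If $x, y \in X$ the result is immediate, since open metric balls around $x$ and $y$ are $\mathcal{T}_g$-open and disjoint from $\partial_g X$, so that $x_n, y_n$ are eventually in $X$ with metric convergence, and the distance function is continuous. Assume then that at least one of $x, y$ lies in $\partial_g X$; by the definition of $\mathcal{T}_g$ this is equivalent to $(x_n, x)_z \to \infty$ and $(y_n, y)_z \to \infty$. The main tool is the CAT$(-1)$ four-point inequality
$$(a, b)_z \geq \min\{(a, c)_z, (b, c)_z\} - \delta$$
with a uniform $\delta$, which in particular forces $(a_n, a_m)_z \to \infty$ whenever $(a_n, c)_z \to \infty$ for a fixed $c$.

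For each $n$ I would choose $a_n \in X$ as follows: if $x_n \in X$, set $a_n = x_n$, noting that $(x_n, x)_z \to \infty$ forces $d(x_n, z) \to \infty$ via the general bound $(a, b)_z \leq \min(d(a, z), d(b, z))$; if $x_n \in \partial_g X$, use Lemma~\ref{gromovproduct} to pick $a_n$ in a representative Gromov sequence of $x_n$ so that $(a_n, x_n)_z > n$. Construct $b_n$ similarly, with the additional requirement that $|(a_n, b_n)_z - (x_n, y_n)_z| < 1/n$ in the boundary case (again via Lemma~\ref{gromovproduct}). The four-point inequality yields $(a_n, x)_z \geq \min((a_n, x_n)_z, (x_n, x)_z) - \delta \to \infty$, so $(a_n)$ is a Gromov sequence representing $x$; likewise $(b_n)$ represents $y$. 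Lemma~\ref{gromovproduct} then gives $(a_n, b_n)_z \to (x, y)_z$, and the approximation finishes the argument. The main obstacle is the diagonal choice: one must simultaneously arrange closeness to $(x_n, y_n)_z$ and Gromov-equivalence to $x$ and $y$, which is exactly where the CAT$(-1)$ strength of the four-point inequality is used.
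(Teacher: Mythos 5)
The paper offers no proof of this proposition at all: it is quoted as Lemma 3.4.22 of \cite{das2017geometry}, so the only comparison available is with that reference, and your argument is essentially the standard proof one would write out. The outline is correct: the $1$-Lipschitz dependence of $(a,b)_z$ on the base point (inherited by boundary arguments via Lemma~\ref{gromovproduct}) reduces everything to a fixed $z$; the case $x,y\in X$ is metric continuity, since metric balls are $\mathcal{T}_g$-open; and in the boundary case the diagonal choice of $a_n,b_n$ along representatives of $x_n,y_n$, combined with the four-point inequality (which does extend to the bordification with the same constant by taking limits along representatives, a step you leave implicit but which Lemma~\ref{gromovproduct} justifies), shows $(a_n)\in x$ and $(b_n)\in y$, after which Lemma~\ref{gromovproduct} identifies the limit. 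One sentence is wrong as written: when exactly one of $x,y$ lies in $\partial_g X$, it is not true that both $(x_n,x)_z\to\infty$ and $(y_n,y)_z\to\infty$, and your recipe for the finite coordinate ("$(y_n,y)_z\to\infty$ forces $d(y_n,z)\to\infty$") fails there. The repair is immediate with tools you already have: for the coordinate converging to a point of $X$, keep $b_n=y_n$ (eventually in $X$ with $d(y_n,y)\to 0$) and use the estimate $|(a,b)_z-(a,b')_z|\le d(b,b')$, i.e.\ the same $1$-Lipschitz bound you proved for the base point, applied in the second argument, together with the second limit of Lemma~\ref{gromovproduct}. With that the mixed case goes through and the proof is complete.
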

	
	Let  $X$ be a complete CAT(0) space and $x_{0}\in X$  a base point. Given to geodesic rays $\sigma$ and $\tau $ issuing  from $x_0$, the map $t\mapsto d(\sigma(t),\tau(t))$ is a convex non-negative function that vanishes at $0$, therefore if it is bounded, then  it has to be constant. This observation gives sense to  the following definitions. 
	
	For $s>r$ there is a  projection $$\overline{B(x_0,s)}\xrightarrow{p_{r,s}}\overline{B(x_0,r)}.$$
	This defines and inverse system of topological spaces indexed by the positive numbers. Let 
	$$\overline{X}=\{[0,\infty)\xrightarrow{\sigma}X\mid \sigma(0)=x_0\,\, \text{and}\, \sigma \,\text{is a generalized geodesic ray}\}$$ be the inverse limit associated to this inverse system. 
	Here a generalized geodesic is either a geodesic ray issuing from  $x_0$ or a geodesic segment issuing from  $x_0$ defined in an interval $[0,r]$, which is considered constant  in $[r,\infty)$. 
	
	The topology of inverse limit in $\overline{X}$ (the subspace topology of the product $X^{\R_{\geq0}}$) is the same as the topology of uniform convergence in compact sets. This topology on $\overline{X}$, often called the \textit{cone topology}, and here denoted as $\mathcal{T}_c$, restricts to the metric topology on $X$ and it does not depend on the choice of the base point $x_0$ (see II.8.8 in \cite{bridson2013metric}). Denote  as  $\partial_cX$ the set of geodesic (infinite) rays with base point in $x_0$ with the topology of subspace of the cone topology.
	
	For every $r>0$ let $$\overline{X}\xrightarrow{p_r}\overline{B(x_0,r)}$$ be the function that is the identity in $\overline{B(x_0,r)}$ and $p_r(\sigma)=\sigma(r)$,  for any $\sigma $ generalized geodesic ray  that is not  constant  on $[r,\infty)$. 
	
	Given a geodesic ray $\xi$,    let $U(\xi,R,\epsilon)$
	be the set of generalized rays $\tau$ such that $\tau|_{[R,\infty )}$ is not constant and $d(p_R(\tau), p_R(\xi))<\epsilon.$ 
	Observe that given  a geodesic ray $\xi$,  the sets $U(\xi,R,\epsilon)$ are a neighborhood basis for the cone topology. 
	
	The following result is often called the finite approximation Lemma, see for example Theorem 1 in Chapter 8 of \cite{coornaert2006geometrie}. 
	\begin{lem}\label{finiteapprox}
		Suppose $(X,x_0)$ is a $\delta$-hyperbolic  geodesic space and  consider  $$\{x_1,\dots,x_n\}\subset X\cup\partial X.$$ Here a point at infinity is understood as the limit of  a geodesic ray.  Define   $Y$ as the  union of the geodesic segments or geodesic rays  $[x_0,x_i]$.   If $2n\leq 2^k+1$, there exists a simplicial tree $Tr(Y)$ and a map $Y\xrightarrow{f}Tr(Y)$ with the following properties:
		\begin{enumerate}
			\item For every $i$, the restriction of $f$ to $[x_0,x_i]$ is an isometry. 
			\item 
			For every $x,y\in Y$, 
			$$d(x,y)-2k\delta\leq d(f(x),f(y))\leq d(x,y).$$
		\end{enumerate} 
	\end{lem}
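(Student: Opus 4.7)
The plan is to prove \cref{finiteapprox} by induction on $k$, via a divide-and-conquer argument. The base case $k = 0$, $n = 1$ is trivial: take $\mathrm{Tr}(Y) = [x_0, x_1]$ and $f$ the identity parametrization.

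For the inductive step, the crucial tool is the four-point reformulation of $\delta$-hyperbolicity: for all $i, j, \ell$,
\begin{equation*}
(x_i, x_j)_{x_0} \geq \min\bigl\{(x_i, x_\ell)_{x_0},\ (x_j, x_\ell)_{x_0}\bigr\} - \delta,
\end{equation*}
so the Gromov products at $x_0$ form a $\delta$-approximate ultrametric on $\{x_1, \ldots, x_n\}$. I would use this to partition the set into two subsets $A, B$, each of cardinality at most $\lceil n/2 \rceil$, such that the minimal cross Gromov product $m = \min_{i\in A, j\in B}(x_i, x_j)_{x_0}$ is a lower bound (up to $\delta$) for every intra-group Gromov product; this is a standard hierarchical-clustering construction for approximate ultrametrics. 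Since the assumption $2n \leq 2^k + 1$ forces $n \leq 2^{k-1}$, one has $2\lceil n/2 \rceil \leq 2^{k-1} + 1$, so the induction hypothesis applies at parameter $k-1$ to each sub-fan $Y_A = \bigcup_{i\in A}[x_0,x_i]$ and $Y_B$, producing trees $T_A, T_B$ and maps $f_A, f_B$ isometric on each ray with distortion $\leq 2(k-1)\delta$.

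I then construct $\mathrm{Tr}(Y)$ by gluing $T_A$ and $T_B$ along a common initial segment of length $m$ emanating from the root (well-defined because rays within $A$ fellow-travel to depth $\geq m - \delta$, and likewise for $B$, so the first $m$ of $T_A$ and of $T_B$ is an honest segment), with $f$ defined piecewise from $f_A, f_B$. Isometry on each $[x_0, x_i]$ is immediate; pairs within $A$ or $B$ inherit the inductive distortion bound $2(k-1)\delta$. For a cross pair $a \in [x_0, x_i]$ with $i \in A$ and $b \in [x_0, x_j]$ with $j \in B$, the tree distance is determined by the common stem of length $m$ and by the subsequent branch-lengths in each sub-tree, which matches $d(a,b)$ in $X$ up to a further $2\delta$ via the thin-triangle condition applied to the triangle with vertices $x_0, a, b$ and the minimality of $m$. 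The errors combine to $2(k-1)\delta + 2\delta = 2k\delta$.

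The main obstacle is constructing a balanced split $A \sqcup B$ that simultaneously respects the approximate ultrametric structure and yields a gluing whose cross-distance estimate absorbs only a single $2\delta$ term per layer of recursion. The $\delta$-slack in the four-point inequality has to be handled carefully so that the telescoped error matches the stated constant $2k\delta$ instead of accumulating into something larger; this is the step where the precise form of the bound $2n \leq 2^k + 1$ is used.
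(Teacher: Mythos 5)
The paper does not actually prove Lemma \ref{finiteapprox}: it is imported from the literature (Theorem 1 in Chapter 8 of \cite{coornaert2006geometrie}), so there is no in-paper argument to compare against and your proposal must stand on its own. The global strategy you describe, induction on $k$ with the hypothesis $2n\leq 2^k+1$ paying for one dichotomy per level, is indeed the classical one in spirit, but the inductive step as you set it up has a genuine gap: the balanced partition $A\sqcup B$ you need does not exist in general. If $T_A$ and $T_B$ are glued along a common stem of length $m$, then every cross pair $i\in A$, $j\in B$ branches at depth exactly $m$, so $d_T(f(x_i),f(x_j))=d(x_0,x_i)+d(x_0,x_j)-2m$, and the two one-sided bounds in the lemma force $(x_i,x_j)_{x_0}\leq m\leq (x_i,x_j)_{x_0}+k\delta$ for \emph{every} cross pair; that is, all cross Gromov products must lie within $k\delta$ of one another. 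The condition you actually impose (the minimal cross product bounds the intra products from below) is far weaker: it holds for \emph{any} balanced partition separating a pair realizing the global minimum, yet such a partition may place two points with a huge mutual Gromov product on opposite sides, and wedging at depth $m=\min$ then distorts their distance by an amount independent of $\delta$. Conversely, the condition that is actually sufficient is incompatible with balance: take a cluster of $n-1$ points with all pairwise products equal to $100$ and one outlier whose products with everything are $0$ (realizable in a simplicial tree, hence in a $\delta$-hyperbolic space for every $\delta>0$). Any partition in which all cross products are within $O(\delta)$ of each other must isolate the outlier, so $|A|=n-1$ and the induction hypothesis at level $k-1$ no longer applies, while the distortion of a balanced split is of order $100$, not of order $\delta$.

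The standard repair is to move the dichotomy from the point set to chains, where a balanced cut is always available. One first proves, by halving the chain, that for $y_0,\dots,y_r$ one has $(y_0,y_r)_{x_0}\geq\min_{1\leq\ell\leq r}(y_{\ell-1},y_\ell)_{x_0}-\lceil\log_2 r\rceil\delta$. Then the tree is built in a single step rather than recursively: set $t_{ij}=(x_i,x_j)_{x_0}$, let $s_{ij}$ be the largest value of $\min_{\ell}t_{i_\ell i_{\ell+1}}$ over injective chains from $i$ to $j$ (the subdominant ultrametric, which satisfies the exact inequality $s_{ij}\geq\min\{s_{i\ell},s_{\ell j}\}$ and hence defines an honest tree when the segments $[0,d(x_0,x_i)]$ are glued along $[0,s_{ij}]$), and use $t_{ij}\leq s_{ij}\leq t_{ij}+(k-1)\delta$, which follows from the chain inequality since chains have at most $n-1<2^{k-1}$ edges, together with the $2\delta$-comparison between $(a,b)_{x_0}$ and $\min\{\alpha,\beta,t_{ij}\}$ for $a,b$ at parameters $\alpha,\beta$ on the segments through $x_i,x_j$. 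This is where the hypothesis $2n\leq2^k+1$ genuinely enters, and it yields the stated distortion; it is essentially the argument in the cited reference.
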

	When $n=2$ the tree of the finite approximation Lemma is a tripod where the extremes are $f(x_i)$, with $i=0,1,2$ (see Proposition 3.1 of Chapter 1 in \cite{coornaert2006geometrie}).
	
	As it was mentioned before, the following theorem is a classic result for which the author could not find a reference   in the literature for non-proper spaces. 	\begin{teo}
Let $X$ be a CAT(-1) space. There is a natural homeomorphism $$((X,\partial_c X), \mathcal{T}_c)\xrightarrow{\Psi}((X,\partial_g X), \mathcal{T}_g).$$	\end{teo}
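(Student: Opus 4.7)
I would define $\Psi$ as the identity on $X$ and, for a geodesic ray $\xi \in \partial_c X$ based at $x_0$, as $\Psi(\xi) = [(\xi(n))_{n\in\N}]$; this is well-defined since $(\xi(n), \xi(m))_{x_0} = \min(n, m) \to \infty$, so the integer samples of any ray form a Gromov sequence. It then remains to prove that $\Psi$ is a bijection and that it identifies the two topologies.

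For injectivity, suppose $\xi, \eta$ are rays from $x_0$ with $\Psi(\xi) = \Psi(\eta)$, so that $(\xi(N), \eta(N))_{x_0} \to \infty$. I would apply \cref{finiteapprox} to the three points $\{x_0, \xi(N), \eta(N)\}$: the resulting tripod has its branch point at distance exactly $(\xi(N), \eta(N))_{x_0}$ from $x_0$, so whenever $t \leq (\xi(N), \eta(N))_{x_0}$ the two points $\xi(t)$ and $\eta(t)$ project to the same point of the tripod, and hence $d(\xi(t), \eta(t)) \leq 6\delta$. Letting $N \to \infty$, this bound holds for every $t \geq 0$, and since $t \mapsto d(\xi(t), \eta(t))$ is a non-negative convex function on $[0, \infty)$ vanishing at $0$ and bounded, it must vanish identically; hence $\xi = \eta$.

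For surjectivity, given a Gromov sequence $(x_n)$ with $L_n = d(x_0, x_n)$, let $\sigma_n$ be the generalized geodesic ray coinciding with $[x_0, x_n]$ on $[0, L_n]$. The comparison angle at $x_0$ in the hyperbolic triangle with sidelengths $L_n, L_m, d(x_n, x_m)$ satisfies
$$\cos \bar\alpha_{n,m} = \frac{\cosh L_n \cosh L_m - \cosh d(x_n, x_m)}{\sinh L_n \sinh L_m} \longrightarrow 1$$
as $n, m \to \infty$, because $(x_n, x_m)_{x_0} \to \infty$; hence $\bar\alpha_{n,m} \to 0$. By the CAT($-1$) comparison on the triangle $(x_0, x_n, x_m)$, the distance $d(\sigma_n(t), \sigma_m(t))$ is bounded above by the hyperbolic distance between two points at time $t$ on rays meeting at angle $\bar\alpha_{n,m}$, a quantity that tends to $0$ for each fixed $t$. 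Completeness of $X$ then produces a geodesic ray $\sigma$ from $x_0$ with $\sigma_n(t) \to \sigma(t)$, and combining this with the $\delta$-hyperbolic four-point inequality yields $(\sigma(n), x_m)_{x_0} \to \infty$, so $\Psi(\sigma) = [(x_n)]$.

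Finally, for the topological equivalence I would match basic neighborhoods at each $\xi \in \partial_c X$. If a ray $\tau$ lies in $U(\xi, R, \epsilon)$, then $d(\tau(R), \xi(R)) < \epsilon$ gives $(\tau(R), \xi(R))_{x_0} > R - \epsilon/2$, and \cref{continuidaddelproductodegromov} promotes this to $(\Psi(\tau), \Psi(\xi))_{x_0} \geq R - \epsilon/2$. Conversely, if $(\Psi(\tau), \Psi(\xi))_{x_0} > t$, then applying \cref{finiteapprox} to $(x_0, \xi(R), \tau(R))$ for $R$ much larger than $t$ yields a common tripod branch of length at least $t - 6\delta$, so $d(\xi(R'), \tau(R')) \leq 6\delta$ for all $R' \leq t - 6\delta$. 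The analogous estimates handle convergence of interior points to boundary points. The main obstacle is surjectivity: producing an honest geodesic ray representing an arbitrary Gromov sequence relies crucially on the quantitative comparison with $\mathbf{H}^2$ afforded by CAT($-1$), since under only CAT($0$) plus Gromov-hyperbolicity the comparison angles $\bar\alpha_{n,m}$ need not tend to zero and the $\sigma_n$ need not be Cauchy on compact sets.
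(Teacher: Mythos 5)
Your overall architecture matches the paper's (the same map $\Psi$, bijectivity via tree approximations, then matching the neighborhood bases $U(\xi,R,\epsilon)$ and $N_t(\xi)$), but you replace two of its ingredients with genuinely different arguments. For surjectivity the paper adapts the Ghys--de la Harpe proof using \cref{finiteapprox}, whereas you obtain the Cauchy property of $t\mapsto\sigma_n(t)$ from an explicit $\mathbf{H}^2$ comparison-angle computation; this is correct (one checks $\cosh d(\bar\sigma_n(t),\bar\sigma_m(t))=\cosh^2 t-\sinh^2 t\cos\bar\alpha_{n,m}\to 1$) and arguably cleaner, and you rightly flag that it is the one place where CAT($-1$) rather than mere Gromov hyperbolicity is used in an essential quantitative way. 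For the inclusion $N_t([\xi])\subset U(\xi,R,\epsilon)$ the paper argues by contradiction, extracting a Gromov sequence outside $U(\xi,R,\epsilon)$ and building a second ray in the class $[\xi]$; your direct tripod estimate is more economical. The injectivity argument via the tripod plus convexity of $t\mapsto d(\xi(t),\eta(t))$ is also fine.

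There is, however, one step that fails as written. From $(\Psi(\tau),\Psi(\xi))_{x_0}>t$ you deduce $d(\xi(R'),\tau(R'))\le 6\delta$ for all $R'\le t-6\delta$, and conclude $\tau\in U(\xi,R,\epsilon)$; but when $\epsilon<6\delta$ this conclusion does not follow, and the error term $6\delta$ cannot be beaten by taking $t$ large, since the tripod approximation constant does not shrink. The repair uses exactly the CAT(0) convexity that the Gromov-hyperbolic formalism ignores: $s\mapsto d(\xi(s),\tau(s))$ is convex and vanishes at $0$, so $d(\xi(S),\tau(S))\le 6\delta$ with $S=t-6\delta$ forces $d(\xi(R),\tau(R))\le 6\delta R/S<\epsilon$ once $t>6\delta+6\delta R/\epsilon$. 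The same convexity step is needed for the interior points $y\in N_t([\xi])\cap X$, which you dismiss as ``analogous''. A smaller point: in the forward inclusion, passing from $(\tau(R),\xi(R))_{x_0}>R-\epsilon/2$ to the same lower bound on $(\Psi(\tau),\Psi(\xi))_{x_0}$ requires the monotonicity of $s\mapsto(\tau(s),\xi(s))_{x_0}$ (a one-line triangle-inequality computation that the paper carries out explicitly), not just the convergence statement of \cref{continuidaddelproductodegromov}. With these additions your proof is complete.
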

	\begin{proof}
		Fix  a base point $z_0\in X$. Observe that for every geodesic ray $\tau$ with $\tau(0)=z_0$, the sequence $(\tau(t_n))$ is a Gromov sequence for any sequence $(t_n)\to\infty$ and the class of equivalence of this Gromov sequence does not depend on the choice of the sequence $(t_n)$.   
		Therefore for every  geodesic ray $\tau$ with starting point at $z_0$ there is a well defined Gromov sequence $[\tau]$. Let $\Psi$ be the  map such that  $\partial_cX\xrightarrow{}\partial_gX$ is 
		defined by  $\Psi(\sigma)=[\sigma]$ and the identity in $X$. 
		In Proposition 4 of Chapter 7 in \cite{ghysgroupes} the authors showed, for proper CAT(-1) spaces, that $\Psi|_{\partial_cX}$ is a bijection. The same proof can be applied in this context  if convergence arguments of Arzel\`{a}-Ascoli type are exchanged by properties of convergence of Gromov sequences and applications of the finite approximation Lemma. 
		
		The claim now is that $\Psi$ is a homeomorphism.  
		Fix $N_t([\sigma])$ for $t>0$ and a geodesic ray $\sigma $ issuing from  $z_0$. Call $C$ the general constant error coming from the tree approximation for $3$ points. Fix $R,\epsilon>0$ such that $R-\epsilon -C>t+1$. Let $\tau\neq\sigma $ be a geodesic ray from $x_0$ such that $d(\tau(R),\sigma(R))<\epsilon$ and consider any $s>R$.
		If $(\sigma(s),\tau(s))_{z_0}>t+1$, then $\sigma(s)\in N_t([\sigma])$. 
		If this is not the case, then $R>(\sigma(s),\tau(s))_{z_0}$ and from the tripod approximation for the points $\{z_0,\sigma(s),\tau(s)\}, $ 
		$$|(\sigma(s),\tau(s))_{z_0}-(\sigma(R),\tau(R))_{z_0}|<C.$$
		But $(\sigma(R),\tau(R))_{z_0}>R-\frac{\epsilon}{2},$
		therefore $(\sigma(s),\tau(s))_{x_0}>t+1, $ which is a contradiction. This shows that $(\sigma(s),\tau(s))_{x_0}>t+1$ and that 
		$$([\tau],[\sigma])_{x_0}=\lim_{s\to\infty}(\sigma(s),\tau(s))_{x_0}\geq t+1,$$
		or in other words, that  $[\tau]\in N_t([\sigma])$. To finish just observe that for every $r>0$, 
		$$\begin{array}{rcl}(\sigma(s+r),\tau(s))_{x_0}&=&\frac{1}{2}\big(2s+r-d(\sigma(s+r),\tau(s))\big)\\&\geq& \frac{1}{2}\big(2s-d(\sigma(s),\tau(s))\big)\\&=&(\sigma(s),\tau(s))_{z_0}. \end{array}$$
		This implies that
		$$\lim_{r\to\infty}(\sigma(s+r),\tau(s))_{x_0}=([\sigma],\tau(s))_{x_0}\geq t+1,$$
		that  shows $\Psi(U(\sigma, R,\epsilon ))\subset N_t([\sigma])$.
		
		Fix $R,\epsilon >0$ and consider $U(\sigma,R,\epsilon).$ Suppose that for every $t>0$  $$N_t([\sigma])\not\subset U(\sigma,R,\epsilon).$$ 
		Thus, for every $n\in\N$ there exists $x_n\in N_n([\sigma])\setminus U(\sigma,R,\epsilon). $ This means that for every $n$, $(x_n,[\sigma])_{z_0}\geq n$. 
		Choose $s_n$ such that for every $r\geq s_n$, $$(x_n,\sigma(r))_{z_0}\geq n.$$ Without lost of generality, suppose that $(s_n)_n$ and $(d(x_n,x_0))_n$ are increasing sequences. Using  the finite approximation lemma for 
		$$\{z_0,x_n,x_{n+r}, \sigma(s_{n+r})\},$$   it is possible to show that $(x_n)$ is a Gromov sequence.
				If $\sigma_n$ is the geodesic segment that connects $z_0$ to $x_n$, then 
		$$\gamma(t)=\lim_{n\to\infty }\sigma_n(t)$$ is a geodesic, in fact $\gamma$ is such that $\Psi(\gamma)=[(x_n)]$.  Here an abuse of notation is made because only for $n$ bigger than $t$ it is possible to assume that $\sigma_n(t)$ is defined.  By construction $d(\gamma(R),\sigma(R))\geq\epsilon$, therefore $\gamma\neq\sigma$, but this is a contradiction because  $(x_n)$  belongs to $[\gamma]$ and $[\sigma]$.  Therefore  there exists $t>0$ such that $$N_t([\sigma])\subset U(\sigma,R,\epsilon).$$ 
	\end{proof}
	\begin{lem}\label{distanciaalintervalo}
		If $X$ is a CAT(-1) space there exists a constant $C>0$ such that for  every $x,y ,z\in X$,
		$$|d(x,[y,z])- (x,y)_z |<C.$$	
	\end{lem}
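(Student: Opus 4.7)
The plan is to reduce everything to the tree case via the finite approximation Lemma~\ref{finiteapprox} applied to the configuration $\{x, y, z\}$. Since CAT$(-1)$ spaces are uniformly Gromov hyperbolic with a universal constant $\delta$ (for instance $\delta = \log(1+\sqrt{2})$), one may invoke the tripod approximation with a universal error.

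Concretely, I would take the base point of the finite approximation to be $z$ and the two non-base points to be $x$ and $y$, so that $Y = [z,x] \cup [z,y]$ contains the geodesic segment $[y,z]$ itself. With $n = 2$ and $k = 2$, Lemma~\ref{finiteapprox} produces a tripod $T$ with leaves $f(z), f(x), f(y)$ and center $c$, together with a map $f \colon Y \to T$ that is an isometry on each of $[z,x]$ and $[z,y]$ and that distorts every other pairwise distance in $Y$ by at most $4\delta$. The key identity is that the distance from $f(x)$ to the center $c$ of the tripod equals the tree Gromov product of $f(y)$ and $f(z)$ based at $f(x)$; since $d(x,z)$ and $d(y,z)$ are preserved exactly by $f$ while only $d(x,y)$ is compressed (by at most $4\delta$), this tree quantity agrees with its analog in $X$ up to $2\delta$.

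To conclude, the upper bound on $d(x,[y,z])$ is obtained by exhibiting the explicit test point $q^{\ast} \in [y,z] \subset Y$ whose tripod image is $c$, namely the point on $[z,y]$ at distance $d(f(z),c)$ from $z$; the $4\delta$ distortion bound then yields $d(x,q^{\ast}) \leq d(f(x),c) + 4\delta$. The matching lower bound follows because every $q \in [y,z] \subset Y$ has tripod image on the branch between $f(y)$ and $f(z)$, so $d(f(x),f(q)) \geq d(f(x),c)$, and $d(x,q) \geq d(f(x),f(q))$ by the distortion bound; this gives a uniform lower bound on $d(x,q)$ that is within $O(\delta)$ of $d(f(x),c)$. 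The main obstacle is purely bookkeeping: identifying which pairwise distances in $Y$ are preserved by $f$ exactly versus only up to $4\delta$, and tracking how these errors propagate through the Gromov product identities so that they collapse into a single universal constant $C$.
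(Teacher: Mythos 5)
Your proof is correct and follows essentially the same route as the paper: both reduce the estimate to a tree via the finite approximation Lemma \ref{finiteapprox} (the paper approximates the four points $w,x,y,z$ with $w$ the point of $[y,z]$ nearest to $x$, whereas you use the tripod on $\{z;x,y\}$ and take as test point the preimage of the branch point, which avoids invoking the nearest-point projection). One remark: what you actually prove, correctly, is $|d(x,[y,z])-(y,z)_x|<C$; the quantity $(x,y)_z$ in the statement is a misprint (take $y=z\neq x$ to see the literal version fails), and the version with $(y,z)_x$ is exactly the one used later in the proof of Lemma \ref{intersecciontheconvexos}.
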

	\begin{proof}
		This is just an  application of  Lemma \ref{finiteapprox} for   $w,x,y,z\in X$ where  $w\in[y,z]$  is the point that minimizes the distance between $x$ and the geodesic segment connecting $y$ and $z$. 
	\end{proof}
	
	In Theorem 1.1 of \cite{capracelytchak} the authors proved the main statement of the  following lemma  in a more general setting. Also in   Proposition 2.1 of  \cite{adams1997amenable} there is a similar result for locally compact CAT(0) spaces, using the idea of that proof, here an elementary argument is given.
	\begin{lem}\label{intersecciontheconvexos}
		Let $X$ be a CAT(-1) space and let $\mathcal{C}=\{C_i\}_{i\in\N}$ be a family of non-empty, closed and convex subsets of $X$ such that for every $n$, $C_{n+1}\subset C_n$. Suppose that  for some (any) $z_0\in X$,   $\lim_{n\to\infty}d(z_0,C_n)=\infty$, then   there exists $\xi\in\partial X$ such that, $$\{\xi\}= \bigcap_{n}\partial C_n.$$
	In particular  if there is a group $G$ acting by isometries  on $X$ and permuting the elements of $\mathcal{C}$, then $\xi$ is a G-fixed point.   
	\end{lem}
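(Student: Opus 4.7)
The strategy is to produce $\xi$ as the common limit of the CAT(0)-projections of $z_0$ onto the $C_n$, and to exploit the convexity of the $C_n$ via Lemma \ref{distanciaalintervalo} to control the relevant Gromov products. Since each $C_n$ is closed and convex in a complete CAT(-1) space (hence CAT(0)), there is a unique nearest-point projection $p_n\in C_n$ of $z_0$, with $d(z_0,p_n)=d(z_0,C_n)\to\infty$. The crucial observation is that for $n\leq m$, both $p_n$ and $p_m$ lie in the convex set $C_n$, so the entire geodesic segment $[p_n,p_m]$ is contained in $C_n$, and hence $d(z_0,[p_n,p_m])\geq d(z_0,C_n)=d(z_0,p_n)$.

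Applying Lemma \ref{distanciaalintervalo} to the triple $(z_0,p_n,p_m)$ then yields $(p_n,p_m)_{z_0}\geq d(z_0,p_n)-C\to\infty$, so $(p_n)$ is a Gromov sequence; let $\xi=[(p_n)]\in\partial_g X$. For each fixed $m$, the tail $(p_n)_{n\geq m}$ lies entirely in $C_m$, so $\xi\in\partial C_m$, and therefore $\xi\in\bigcap_m\partial C_m$.

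For uniqueness, suppose $\eta\in\bigcap_n\partial C_n$. For each $n$, using Lemma \ref{gromovproduct} to make sense of Gromov products with a boundary class, I choose a point $q_n\in C_n$ satisfying $(q_n,\eta)_{z_0}\geq n$; such a point exists because $\eta$ is the class of some Gromov sequence in $C_n$. The standard $\delta$-hyperbolicity inequality for Gromov products, extended to boundary classes via Lemma \ref{gromovproduct}, gives $(q_n,q_m)_{z_0}\geq\min(n,m)-\delta$, so $(q_n)$ is itself a Gromov sequence representing $\eta$. I then repeat the projection estimate for the mixed pair: for $n,m\geq N$, both $p_n$ and $q_m$ belong to $C_N$, hence $[p_n,q_m]\subset C_N$, and Lemma \ref{distanciaalintervalo} produces $(p_n,q_m)_{z_0}\geq d(z_0,C_N)-C$, which diverges as $N\to\infty$. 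Thus $(p_n)$ and $(q_n)$ are equivalent Gromov sequences, forcing $\xi=\eta$.

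For the equivariance claim, any $g\in G$ acts as an isometry whose extension to $\partial_g X$ satisfies $g(\partial C)=\partial(gC)$ for every closed convex $C$. Since $G$ permutes $\mathcal{C}$ one has $\{gC_n\}_n=\{C_n\}_n$ as a set, so $g\bigl(\bigcap_n\partial C_n\bigr)=\bigcap_n\partial(gC_n)=\bigcap_n\partial C_n=\{\xi\}$, and hence $g\xi=\xi$. I expect the main subtlety to be the uniqueness step: producing the auxiliary sequence $(q_n)\subset C_n$ that realizes $\eta$, and then trapping arbitrary mixed pairs $(p_n,q_m)$ inside a common $C_N$, is precisely where the nested-chain hypothesis does its real work and what prevents the argument from reducing to a routine diagonalization.
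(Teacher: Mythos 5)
Your proof is correct, and its existence half --- nearest-point projections $p_n$ of $z_0$ onto $C_n$, the inclusion $[p_n,p_m]\subset C_n$ by convexity and nestedness, and Lemma \ref{distanciaalintervalo} to convert $d(z_0,[p_n,p_m])\geq d(z_0,C_n)$ into a divergent lower bound on $(p_n,p_m)_{z_0}$ --- is exactly the paper's argument. Where you genuinely diverge is the uniqueness step. The paper takes a second class $\eta\in\bigcap_n\partial C_n$ distinct from $\xi$, invokes the existence and uniqueness of the bi-infinite geodesic $\tau$ joining $\eta$ to $\xi$ (Proposition 4.4.4 of \cite{das2017geometry}), observes that the image of $\tau$ must lie in every closed convex $C_n$ whose boundary contains both endpoints, and contradicts $\bigcap_n C_n=\emptyset$. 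You instead stay entirely inside the Gromov-product formalism: you realize $\eta$ by a diagonal sequence $(q_n)$ with $q_n\in C_n$ and $(q_n,\eta)_{z_0}\geq n$, trap each mixed pair $p_n,q_m$ with $n,m\geq N$ inside $C_N$, and rerun the same projection estimate to obtain $(p_n,q_m)_{z_0}\geq d(z_0,C_N)-C\to\infty$, so the two sequences are equivalent and $\xi=\eta$. Your route buys independence from the theory of bi-infinite geodesics between boundary points (and from the small auxiliary fact that such a geodesic lies in a closed convex set containing its endpoints at infinity); the price is the bookkeeping needed to verify that $(q_n)$ is a Gromov sequence in the class $\eta$, which you dispatch, legitimately if somewhat tersely, with the standard $\delta$-hyperbolicity inequality for Gromov products extended to the boundary via Lemma \ref{gromovproduct}. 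The equivariance claim is handled identically in both proofs.
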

	
	\begin{proof}
		For every $n$ there is $x_n\in C_n$ such that $d(z_0,x_n)=d(z_0,C_n).$ There is a constant $C>0$ coming from the finite approximation lemma such that for every $n,m\in\N$, 
		$$|d(z_0,[x_n,x_m])-(x_n,x_m)_{z_0}|<C.$$ 
		If $m$ is bigger than $n$, 
		$$d(z_0,[x_n,x_m])\geq d(z_0,x_n), $$
		therefore $(x_n)$ is a Gromov sequence. If $\xi$ is   its  equivalence class,    then $\xi\in\bigcap_n\partial C_n$. 
		
		Suppose there is  $\eta\neq\xi $  such that $\eta\in\bigcap_n\partial C_n$. If  $\tau $ is  the unique  geodesic connecting  $\eta $ and $\xi$ (see Proposition 4.4.4 of \cite{das2017geometry}), then the image of $\tau$ is contained in every $C_n$, this is a contradiction because $\bigcap_nC_n=\emptyset$. 
		
		The last claim of the Lemma follows from the fact that $G$ also permutes the elements of $\{\partial C_n\}_{n}.$
	\end{proof}
	
	Let $G$ be a group acting on a space $X$. A function $X\xrightarrow{f}\R$ is called \textit{quasi-invariant} if for every $g$ there exists a constant $c(g)$ such that for every $x\in X$, 
	$$f(gx)-f(x)=c(g).$$
	
	Observe that the map $c$ in the previous definition has to be a homomorphism.  The statement of the next lemma, but in the context of  proper  CAT(0) spaces, appears  in Section 2 of  \cite{adams1997amenable}. The arguments there work also  for CAT(-1) spaces  given the statement of Lemma \refeq{intersecciontheconvexos} and the following observation.  Let  $\{C_i\}_{i\in\N}$ be a family of non-empty,  convex and closed sets in a complete CAT(0) space $X$ such  that  for every $n$, $C_{n+1}\subset C_{n}$. Therefore, $\bigcap_{n}C_n=\emptyset$ if, and only if, for every $x_0\in X$,  $\lim\limits_{n\to\infty}d(x_0,C_n)=\infty$ (see Proposition 1.2 of \cite{korevaari1997global}). 
	
	\begin{lem}\label{quasiinvariantfunctions}
		Let a group $G$ act by isometries on a CAT(-1) space $X$. If the action does not have  fixed points in $X\cup\partial X$, 
		then every quasi-invariant convex function defined on $X$ is $G$-invariant,  has a lower bound and the non-empty sublevel sets of it  are $G$-invariant and  unbounded.
	\end{lem}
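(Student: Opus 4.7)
The plan is to establish the three conclusions in order—lower-boundedness of $f$, $G$-invariance of $f$, and unboundedness of every non-empty sublevel set—each step ruling out an obstruction by producing a $G$-fixed point. Let $c:G\to\R$ be the homomorphism furnished by the quasi-invariance relation $f(gx)-f(x)=c(g)$. For any $g\in G$ and $t\in\R$, the closed convex sublevel set $C_t=\{x\in X:f(x)\leq t\}$ satisfies $g\cdot C_t=C_{t+c(g)}$.

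For lower-boundedness, assume for contradiction that $\inf f=-\infty$; then $(C_{-n})_{n\in\N}$ is a decreasing family of non-empty closed convex sets with $\bigcap_n C_{-n}=\emptyset$, so by the observation preceding the statement, $d(z_0,C_{-n})\to\infty$. Lemma \ref{intersecciontheconvexos} then supplies a unique $\xi\in\partial X$ with $\{\xi\}=\bigcap_n\partial C_{-n}$. For any $g\in G$ one has $g\xi\in\partial C_{-n+c(g)}$ for every $n$, and for any fixed $m$, choosing $n$ with $-n+c(g)\leq-m$ yields $C_{-n+c(g)}\subseteq C_{-m}$, hence $g\xi\in\partial C_{-m}$. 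Thus $g\xi\in\bigcap_m\partial C_{-m}=\{\xi\}$, so $\xi$ is a $G$-fixed point at infinity, contradicting the hypothesis. Once $m:=\inf f$ is finite, quasi-invariance gives $\inf_x f(gx)=m+c(g)$, while the substitution $y=gx$ yields $\inf_x f(gx)=\inf_y f(y)=m$, so $c(g)=0$ for every $g\in G$; hence $f$ is $G$-invariant and each $C_t$ is automatically $G$-invariant.

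For the unboundedness claim, suppose some non-empty $C_t$ were bounded. A non-empty bounded subset of a complete CAT(0) space admits a unique circumcenter, and the uniqueness makes the assignment equivariant under isometries; the circumcenter of $C_t$ would then be a $G$-fixed point of $X$, again contradicting the hypothesis, so $C_t$ must be unbounded. The subtlest step is the first: one has to ensure that the boundary point $\xi\in\bigcap_n\partial C_{-n}$ is genuinely $G$-fixed rather than merely $G$-quasi-fixed up to the shift by $c(g)$, and this is precisely what the index-shifting comparison between $\partial C_{-n+c(g)}$ and $\partial C_{-m}$ accomplishes.
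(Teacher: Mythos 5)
Your proof is correct and follows exactly the route the paper indicates: the paper itself gives no written proof but defers to Section 2 of \cite{adams1997amenable} together with Lemma \ref{intersecciontheconvexos} and the nested-convex-set observation, and your write-up supplies precisely those details, including the index-shift needed because $gC_t=C_{t+c(g)}$ only quasi-permutes the sublevel sets (so the ``in particular'' clause of Lemma \ref{intersecciontheconvexos} does not apply verbatim) and the circumcenter argument ruling out bounded sublevel sets. The only implicit assumption, which you share with the paper, is that the convex function is (lower semi)continuous so that its sublevel sets are closed.
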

	
	Let $G$ be a topological group and let $X$ be a topological space. An action of $G$ on $X$ is called \textit{orbitally continuous} if for every $x\in X$, the map $g\mapsto g\cdot x$ is continuous. If $X$ is a $CAT(-1)$ space an orbitally continuous representation $G\xrightarrow{\rho}Isom(X)$ is called \textit{non-elementary} if  it does not have finite orbits in $X\cup\partial X$. 	From now on all the representations will be considered orbitally continuous. 
	
	If $X$ is  $\text{CAT}(-1)$ space, $x_0$ is a base of point of $X$ and $\xi\in \partial  X$, the \textit{  Busemann function} based on $\xi$ and normalized in $x_0$ is defined as follows. If  $\sigma$ is the geodesic ray that starts at $x_0$ and points towards $\xi$, 
	$$b_{\xi,x_0}(y)=\lim_{t\to\infty}d(y,\sigma(t))-t.$$
	Observe that 
	$$\begin{array}{rcl}
		b_{\xi,x_0}(y)+2(y,\xi)_{x_0}&=&\lim\limits_{t\to\infty}( d(y,\sigma(t))-t
		) + \lim\limits_{t\to\infty}\big( d(y,x_0)+t-d(y,\sigma(t))
		\big) \\
		&=&d(y,x_0).\end{array}$$
	
	The following lemma is well known but a reference in the literature is unknown to the author. 
	\begin{lem}\label{orbitasfinitas}
		Let $X$ be a CAT(-1) space. A representation  $G\xrightarrow{\rho}Isom(X)$ is non-elementary if, and only if, it does not fix a point in $X\cup\partial X$ and  it does not  preserve a geodesic. 
	\end{lem}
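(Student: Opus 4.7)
The plan is to prove both implications of the equivalence by contradiction. The forward direction is immediate: a fixed point in $X\cup\partial X$ is itself a singleton (hence finite) orbit, and a $G$-preserved geodesic has at most two endpoints at infinity that form a $G$-invariant subset of $\partial X$ of size at most two; either conclusion contradicts non-elementarity.

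For the converse, I would assume $G$ has no fixed point in $X\cup\partial X$ and preserves no geodesic, then suppose toward contradiction that $F\subset X\cup\partial X$ is a finite $G$-orbit and run a three-case analysis. If $F\cap X\neq\emptyset$, then $F\cap X$ is a bounded $G$-invariant subset of $X$ and its unique CAT(0) circumcenter is a $G$-fixed point --- contradiction. If $F\subset\partial X$ with $|F|\le 2$, then either there is a fixed boundary point or $G$ preserves the unique geodesic joining the two endpoints, which exists in a CAT(-1) space by Proposition 4.4.4 of \cite{das2017geometry} (already invoked in the proof of Lemma \ref{intersecciontheconvexos}).

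The hard case --- and where I expect the main obstacle --- is $F\subset\partial X$ with $|F|\ge 3$. My plan is to reduce it to the first subcase by producing a finite $G$-orbit inside $X$. Let $K=\ker(G\to\mathrm{Sym}(F))$, a normal finite-index subgroup that pointwise fixes $F$. Pick three distinct $\xi_1,\xi_2,\xi_3\in F$ together with the three connecting geodesics $\gamma_{12},\gamma_{13},\gamma_{23}$. Since $K$ fixes both endpoints of each $\gamma_{ij}$, every $k\in K$ acts on $\gamma_{ij}$ as an orientation-preserving isometry, hence as a translation by some amount $\ell_{ij}(k)\in\R$. The key tool is the \emph{Busemann cocycle identity}: whenever $k$ fixes $\xi\in\partial X$, the difference $b_{\xi,x_0}(ky)-b_{\xi,x_0}(y)$ is a constant $c(k,\xi)$ independent of $y$, because two Busemann functions centered at the same boundary point differ by a constant. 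Computing $c(k,\xi_i)$ along the two geodesics incident to $\xi_i$ with consistent orientations yields the linear relations $c(k,\xi_1)=-\ell_{12}=-\ell_{13}$, $c(k,\xi_2)=\ell_{12}=-\ell_{23}$, $c(k,\xi_3)=\ell_{13}=\ell_{23}$, whose only solution is $\ell_{ij}(k)\equiv 0$. Hence $K$ pointwise fixes $\gamma_{12}$; any $p\in\gamma_{12}$ is then a $K$-fixed point, so $Gp\subset X$ is finite (as $[G:K]<\infty$), and the first subcase applied to $Gp$ produces a $G$-fixed point --- the desired contradiction. The CAT(-1) hypothesis enters essentially in this cocycle computation: in a general CAT(0) space an isometry fixing three distinct boundary points need not act trivially on any connecting geodesic, so the passage from a finite orbit at infinity back to a fixed point in $X$ would fail.
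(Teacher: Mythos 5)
Your proof is correct, and in the crucial case of a finite boundary orbit with at least three points it takes a genuinely different route from the paper. The paper handles that case analytically: it forms the convex quasi-invariant function $f=\sum_i b_{\xi_i,x_0}$, invokes Lemma \ref{quasiinvariantfunctions} to conclude that its non-empty sublevel sets are unbounded, and then derives a contradiction from the asymptotics $b_{\xi_i,x_0}(y)=d(y,x_0)-2(y,\xi_i)_{x_0}$ along an unbounded sequence in a sublevel set (splitting according to whether the sequence converges to one of the $\xi_i$, which is where $l\geq 3$ is used). You instead pass to the finite-index kernel $K$ of $G\to\mathrm{Sym}(F)$, use the uniqueness of geodesics between distinct boundary points of a CAT(-1) space (the same Proposition 4.4.4 of \cite{das2017geometry} the paper cites elsewhere) to see that each $k\in K$ translates each connecting geodesic, and then play the three Busemann cocycles $c(k,\xi_i)$ against each other via the relations $c(k,\xi_i)+c(k,\xi_j)=0$ coming from the fact that $b_{\xi_i,x_0}+b_{\xi_j,x_0}$ is constant along $\gamma_{ij}$; this forces all translation lengths to vanish, produces a $K$-fixed point in $X$, and reduces to the circumcenter case. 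Your argument is more elementary and self-contained (it does not rely on Lemma \ref{quasiinvariantfunctions} and its underlying nested-convex-sets machinery) and yields the stronger structural fact that a subgroup fixing three boundary points pointwise fixes a point of $X$; the paper's argument has the advantage of reusing tools it needs anyway for Proposition \ref{reticulanofijaunpunto}, where the same Busemann-integral technique is run against an invariant measure rather than a finite set. Both the forward direction and the small cases ($F\subset X$, $|F|\leq 2$ at infinity) are handled the same way in both proofs.
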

	\begin{proof}
		Suppose that $\rho$ does not have fixed points in $X\cup \partial X$ and that it does not preserve a geodesic. 	If $\rho$ has a finite orbit in $X$, then it has a fixed point in $X$ (see  Corollary II.2.7 of \cite{bridson2013metric}).
		Suppose  that  there is $\{\xi_1,\dots,\xi_l\}$ a $G$-invariant set in $\partial X$ with $n\geq 3$. Fix a base point $x_0\in X$ and consider the function $f=\sum_{i=1}^{n}b_{\xi_i,x_0}$.  
		
		Observe that $b_{\xi,x_0}(gy)=b_{g^{-1}\xi,g^{-1}x_0}(y)$.  As the set $\{\xi_1,\dots,\xi_l\}$ is invariant, there is a permutation of $\{1,\dots,l\}$ defined by  $g^{-1}\xi_i=\xi_{\varphi(i)}$. Therefore 
		$$\begin{array}{rcl}b_{\xi,x_0}(gy)&=&b_{g^{-1}\xi_i,g^{-1}x_0}(y)\\
			&=&b_{\xi_{\varphi(i)},g^{-1}x_0}(y)\\
			&=&b_{\xi_{\varphi(i)},x_0}(y)-b_{\xi_{\varphi(i)},x_0}(g^{-1}x_0). 
		\end{array}$$
		As a consequence,  the convex function $f$ is quasi-invariant  because  
		$$f(gy)=\sum_{i=1}^{n}b_{\xi_i,x_0}(gy)=\sum_{i=1}^{n}b_{\xi_i,x_0}(y)-\sum_{i=1}^{n}b_{\xi_i,x_0}(g^{-1}x_0).$$
		
		By Lemma \ref{quasiinvariantfunctions}, any non-empty sublevel set of $f$ is unbounded. Fix one non-empty sublevel set $C_r$ and let $(y_n)\in C_r$ be an unbounded sequence. Up to taking a subsequence, we can suppose that $(y_n)$ converges to at most one point at infinity $\eta.$ Observe that 
		for every $\xi_i$, 
		$$b_{\xi_i,x_0}(y_n)=d(y_n,x_0)-2(y_n,\xi_i)_{x_0},$$
		therefore if $\eta\neq\xi_1,\dots,\xi_l$,  there exists $C>0$ such that for every $n$,  $$|f(y_n)-ld(y_n,x_0)|<C.$$ This is a contradiction because $\min(f)\leq f(y_n)\leq r$ and $\lim_n d(y_n,x_0)=\infty$. 
		
		Now suppose that $(y_n)$ converges to $\eta=\xi_1$. Observe that $b_{\xi_1,x_0}(y)\geq-d(y,x_0)$ and that there exists $C'>0$ such that for every $y_n$, 
		$$\begin{array}{rcl}f(y_n)&=&b_{\xi_1,x_0}(y_n) +b_{\xi_2,x_0}(y_n)+\cdots+b_{\xi_l,x_0}(y_n)\\&\geq& -d(y_n,x_0)+(l-1)d(y_n,x_0)-C'\\&\geq&(l-2)d(y_n,x_0)-C'.\end{array}$$
		Therefore $\{d(y_n,x_0)\}_n$ is bounded, which is a contradiction. 
	\end{proof}
	\section{The main result.}
	Let $G$ be a (Hausdorff) locally compact group. A discrete subgroup $\Gamma$ is called a \textit{lattice} if the space $G/H$ admits a non-zero  finite   $G$-invariant Radon measure. 
	
	The next proposition  appears in   Proposition  2.1 of \cite{caprace2009isometry} in the context of proper  CAT(0) spaces. The ideas in that article  can be used with slight modifications for the case of CAT(-1) spaces. 
	\begin{prop}\label{reticulanofijaunpunto}
		Suppose that $G$ is a locally compact and $\sigma$-compact group,   $\Gamma \leq G$ is a lattice and $X$ is a CAT(-1) space.  If $G\xrightarrow{\rho}Isom(X) $ is a non-elementary representation and $\rho|_\Gamma$ does not have fixed points in $X$, then $\rho|_{\Gamma}$ is a non-elementary representation. 
	\end{prop}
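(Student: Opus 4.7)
The strategy is to argue by contradiction using an averaged Busemann function: from a $\Gamma$-fixed point at infinity one builds a $G$-quasi-invariant convex function on $X$ whose growth at infinity contradicts non-elementarity of $\rho$.

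Suppose $\rho|_{\Gamma}$ is elementary. Since $\rho|_{\Gamma}$ has no fixed point in $X$ by hypothesis, Lemma \ref{orbitasfinitas} yields either a $\Gamma$-fixed point $\xi\in\partial X$ or a $\Gamma$-preserved geodesic, the latter producing a $2$-orbit $\{\xi^+,\xi^-\}\subset\partial X$. In the second case I would pass to the index-$2$ subgroup $\Gamma_0\leq\Gamma$ that fixes each of $\xi^\pm$; this $\Gamma_0$ is still a lattice in $G$ and still has no fixed point in $X$, since a $\Gamma_0$-fixed point would give $\Gamma$ a $2$-orbit in $X$ and hence a $\Gamma$-fixed circumcenter. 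So I may assume outright that $\Gamma$ fixes a single $\xi\in\partial X$.

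Let $\mu$ denote the $G$-invariant probability measure on $G/\Gamma$ and introduce
$$\tilde b(y) \;=\; \int_{G/\Gamma} b_{g\xi,\,x_0}(y)\, d\mu(g\Gamma)\qquad (y\in X).$$
The integrand depends only on the coset $g\Gamma$ because $\Gamma\xi=\xi$, is measurable in $g\Gamma$ (orbital continuity on $X$ extends to continuity of $g\mapsto g\xi$ on $\partial X$), and is uniformly bounded in $g$ by $d(y,x_0)$; hence $\tilde b$ is a well-defined convex function. Combining $b_{g\xi,x_0}(hy)=b_{h^{-1}g\xi,\,h^{-1}x_0}(y)$ with the base-point change identity and then substituting $g\mapsto hg$ using $G$-invariance of $\mu$, a direct computation gives
$$\tilde b(hy) \;=\; \tilde b(y) \;-\; \tilde b(h^{-1}x_0)\qquad(h\in G),$$
so $\tilde b$ is $G$-quasi-invariant. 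Since $\rho$ is non-elementary on $X$, Lemma \ref{quasiinvariantfunctions} upgrades $\tilde b$ to a genuinely $G$-invariant convex function, and every non-empty sublevel set $C_r=\{\tilde b\le r\}$ is $G$-invariant and unbounded.

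The contradiction comes from showing $\tilde b(y_n)\to+\infty$ along any unbounded sequence $(y_n)\subset C_r$. After extraction assume $y_n\to\eta\in\partial X$ in the cone topology. Writing $b_{g\xi,x_0}(y_n)=d(y_n,x_0)-2(y_n,g\xi)_{x_0}$ yields
$$\tilde b(y_n) \;=\; d(y_n,x_0) \;-\; 2\int_{G/\Gamma}(y_n,g\xi)_{x_0}\, d\mu(g\Gamma).$$
The set $A=\{g\Gamma : g\xi=\eta\}$ has $\mu(A)=0$: otherwise, by $G$-invariance of $\mu$, the $G$-orbit of $\eta$ in $\partial X$ would be finite, contradicting non-elementarity of $\rho$. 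For $g\Gamma\notin A$, Proposition \ref{continuidaddelproductodegromov} gives $(y_n,g\xi)_{x_0}\to(\eta,g\xi)_{x_0}<\infty$; since always $(y_n,g\xi)_{x_0}\le d(y_n,x_0)$, dominated convergence applied to $(y_n,g\xi)_{x_0}/d(y_n,x_0)$ (pointwise limit $0$ off the null set $A$, dominated by $1$) shows that $\int(y_n,g\xi)_{x_0}\,d\mu=o(d(y_n,x_0))$. Hence $\tilde b(y_n)=(1+o(1))d(y_n,x_0)\to+\infty$, contradicting $y_n\in C_r$. The main obstacle is the measure-theoretic bookkeeping, namely measurability of $g\Gamma\mapsto(y,g\xi)_{x_0}$ inherited from orbital continuity and the vanishing $\mu(A)=0$ needed to make the dominated convergence step produce genuine growth of $\tilde b(y_n)$; once these technicalities are settled, the argument parallels the proof of Lemma \ref{orbitasfinitas}, with averaging over $G/\Gamma$ playing the role of the finite sum of Busemann functions there.
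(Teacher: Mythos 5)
Your argument is essentially the paper's own proof: the same averaged Busemann function built from the $\Gamma$-fixed boundary point and the invariant measure on $G/\Gamma$, the same quasi-invariance computation feeding into Lemma~\ref{quasiinvariantfunctions}, the same reduction of the preserved-geodesic case to an index-two sublattice, and the same contradiction from the growth of $\tilde b$ along an unbounded sequence in a sublevel set, with your dominated-convergence step playing the role of the paper's explicit sets $V(n)$ of uniformly bounded Gromov product. The only caveat is that in a non-proper CAT($-1$) space an unbounded sequence need not subconverge to a boundary point, so ``after extraction assume $y_n\to\eta$'' should be weakened to ``after extraction there is at most one $\eta$ with $\sup_n (y_n,\eta)_{x_0}=\infty$'' (which is how the paper phrases it); your estimate then goes through unchanged, with $A$ possibly empty.
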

	\begin{proof}
		The proof will be by contradiction. Suppose that there exists  $\eta\in\partial X$ fixed by the action of  $\Gamma$. Using the continuous map $G/\Gamma\rightarrow\partial X$, induced by the orbit map $g\mapsto g\eta$, it is possible to define a  $G$-invariant probability   measure  $\mu$ in $\partial X.$ 
		Fix a point $x_0\in X$ and consider the function 
		$$F(y)=\int_{\partial X} b_{\xi,x_0}(y)d\mu(\xi)=\int_{G/\Gamma}b_{g \eta,x_0}(y)d\nu(g\Gamma),$$ 
		where $\nu$ is the $G$-invariant probability  measure in $G/\Gamma$.
		The function $\xi\mapsto  b_{\xi,x_0}(y)$ is continuous (see Lemma 3.4.22 in \cite{das2017geometry}) and for every $\xi\in\partial X$,  $|b_{\xi,x_0}(y)|\leq d(y,x_0)$.  This shows that the integral makes sense. 
		
		Every function $b_{\xi,x_0}$ is  convex, therefore  $F$ is convex too. Moreover, for every $g\in G$, 
		$$\begin{array}{rcl}
			F(g^{-1}y)&=&\int_{\partial X} b_{\xi,x_0}(g^{-1}y)d\mu(\xi)\\
			&=&\int_{\partial X} b_{g\xi,gx_0}(y)d\mu(\xi)\\
			&=&\int_{\partial X}\Big( b_{g\xi,x_0}(y)-b_{g\xi,gx_0}(x_0)\Big)d\mu(\xi)
			\\&=&
			\int_{\partial X}\Big( b_{g\xi,x_0}(y)-b_{\xi,x_0}(g^{-1}x_0)\Big)d\mu(\xi)
			\\&=&
			F(y)-F(g^{-1}x_0),\end{array} $$
		where the last equality holds because the  measure $\mu$ is $G$-invariant. 
		Therefore $F$ is quasi-invariant, and by Lemma \ref{quasiinvariantfunctions}, it is a $G$-invariant function. 
		
		Notice that $x_0\in C_0$,  the sublevel set of $F$ associated to  $0$. Observe that for every $n\in\N$ there exists $x_n\in C_0$ such that $d(x_0,x_n)>n$. Up to taking a subsequence, it is possible to suppose that $(x_n)$   converges  at most to $\xi_0\in \partial X$. 
		The claim is that $F(x_n)\to\infty$, which would be a contradiction. The proof for this statement  will follow  the ideas of Lemma 2.4 in \cite{burger1996cat}. 
		
		By   Lemma \ref{orbitasfinitas},  the orbit of every $\eta\in\partial X$ is infinite, hence $\mu$ is a non-atomic  measure, therefore		
		$$F(y)=\int\limits_{\partial X\setminus \xi_0}^{} b_{\xi,x_0}(y)d\mu(\xi).$$
		For every $y,z\in X$, 
		$$(y,z)_{x_0}\leq \min\{d(y,x_0), d(z,x_0)\},$$
		thus, for every $\eta \in\partial X$, 
		$(y,\eta)_{x_0}\leq d(y,x_0). $
		Therefore, for every $y\in X$ and $\eta\in\partial X$, 
		$$b_{\eta,x_0}(y)=d(y,x_0)-2(y,\eta)_{x_0}\geq -d(y,x_0).$$
		Define for every $n\in \N$ the measurable set 
		$$V(n)=\{\eta\in\partial X\mid \sup_{m\in\N}\{2(x_m,\eta)_{x_0}\}\leq n\}.$$
		The sequence $(x_n)$ belongs to at most $\xi_0$, therefore 
		$$\partial X\setminus\xi_0\subset \bigcup_{n}V(n). $$ 
	For every $n$,  $V(n)\subset V(n+1)$, thus there exists  some $n_0$ such that $\mu(V(n_0))>\frac{1}{2}.$
		Therefore for every  $x_m$, 
		$$\begin{array}{rcl}
			F(x_m)&=&\int\limits_{V(n_0)\setminus \xi_0} b_{\xi,x_0}(x_m)d\mu(\xi)+\int\limits_{(\partial X\setminus \xi_0)\setminus V(n_0)} b_{\xi,x_0}(x_m)d\mu(\xi)\\
			&\geq& \big(d(x_m,x_0)-n_0\big)\mu(V(n_0))-\big(1-\mu(V(n_0))\big)d(x_m,x_0)\\
			&=&\big(2\mu (V(n_0))-1\big)d(x_m,x_0)-n_0\mu(V(n_0)).
		\end{array}$$
		Thus 
		$F(x_m)\to\infty,$
		which is a contradiction.
		
		If $\rho|_\Gamma$ permutes two points at infinity there is an index two subgroup of $\Gamma$ that preserves a point at infinity. A finite index subgroup of a lattice is a lattice (see for example Lemma 1.6 in \cite{raghunathan1972discrete}), thus this assumption leads to a contradiction.
		
		If $\Gamma$ has a fixed point $x\in X$, the orbit map $g\mapsto g\cdot x$ induces in $X$ a $G$-invariant probability measure $\mu$. Consider  a nested family of  compact sets $\{K_i\}_{i\in\N}$ such that $\bigcup_{i}K_i=G$. 
		There exists $i$ such that $\mu(K_i\cdot x)>1/2$, therefore for every $g\in G$, 
		$$gK_i\cdot x \cap K_i\cdot x\neq\emptyset,$$ or in other words,  there are $k_1,k_2\in K_i$ such that $gk_1\cdot x=k_2 \cdot x$. 
		Observe that 
		$$\begin{array}{rcl}
			d(g\cdot x, x)&\leq &d(g\cdot x, gk_1\cdot x)+d(k_2\cdot x, x ).			
		\end{array}$$
		This shows that $x$ has a bounded orbit, but this is a contradiction because $G$ does not fix any point in $X$.  
	\end{proof}		
	Observe that the arguments in the previous proof show that if $G$ has a non-elementary representation on a CAT(-1) space $X$, then neither $\partial X$ nor $X$  admit a $G$-invariant probability measure. This property characterizes the  non-elementary representations. 
	\begin{prop}
		Let $G$ be a locally compact and $\sigma$-compact group and let $X$ be a CAT(-1) space. If   $G\xrightarrow{\rho} Isom(X)$ is a representation, then   $\rho$ is non-elementary if, and only if, neither $X$ nor $\partial X$   admit a $G$-invariant probability measure. 
	\end{prop}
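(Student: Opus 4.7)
The plan is to treat the two directions separately. The easy implication, \emph{elementary $\Rightarrow$ invariant probability measure exists}, is immediate: if $\{p_1,\dots,p_n\}\subset X\cup\partial X$ is a finite $G$-orbit, then since isometries preserve both $X$ and $\partial X$ it lies entirely in one of the two pieces, and $\frac{1}{n}\sum_i\delta_{p_i}$ is a $G$-invariant Borel probability measure on the corresponding space.

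For the converse, suppose $\rho$ is non-elementary and first consider a $G$-invariant probability measure $\mu$ on $\partial X$. I would reproduce, essentially verbatim, the integrated Busemann argument from the proof of Proposition \ref{reticulanofijaunpunto}. Fix $x_0\in X$ and set $F(y)=\int_{\partial X}b_{\xi,x_0}(y)\,d\mu(\xi)$; the bound $|b_{\xi,x_0}(y)|\leq d(y,x_0)$ and continuity of $\xi\mapsto b_{\xi,x_0}(y)$ make $F$ well defined and convex. Using the identity $b_{\xi,x_0}(gy)=b_{g^{-1}\xi,x_0}(y)-b_{g^{-1}\xi,x_0}(g^{-1}x_0)$ together with $G$-invariance of $\mu$, a direct computation yields $F(gy)=F(y)-F(g^{-1}x_0)$, so $F$ is quasi-invariant. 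Lemma \ref{quasiinvariantfunctions}, applied to the non-elementary action, then gives that $F$ is $G$-invariant, bounded below, with unbounded non-empty sublevel sets. By Lemma \ref{orbitasfinitas} every $G$-orbit in $\partial X$ is infinite, so an atom of $\mu$ would spawn an orbit of atoms of equal mass with infinite total measure, contradicting $\mu(\partial X)=1$; hence $\mu$ is non-atomic. Choosing an unbounded sequence $(x_n)$ in a sublevel set, subsequentially converging to at most one point $\xi_0\in\partial X$, and introducing
\[
V(k)=\bigl\{\eta\in\partial X\mid \sup_{m}2(x_m,\eta)_{x_0}\leq k\bigr\},
\]
non-atomicity forces $\mu\bigl(\bigcup_k V(k)\bigr)=1$; pick $k_0$ with $\mu(V(k_0))>1/2$ and split $F(x_m)$ over $V(k_0)$ and its complement, using $b_{\xi,x_0}(x_m)\geq d(x_m,x_0)-k_0$ on $V(k_0)$ and the trivial bound $b_{\xi,x_0}(x_m)\geq-d(x_m,x_0)$ elsewhere, to conclude $F(x_m)\to\infty$, contradicting the bound on the sublevel set.

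Now suppose $\mu$ lives on $X$. Fix $x_0\in X$; since $X=\bigcup_n B(x_0,n)$, continuity of measure provides $R>0$ with $\mu(B(x_0,R))>1/2$. $G$-invariance gives $\mu(B(gx_0,R))=\mu(gB(x_0,R))>1/2$ for every $g\in G$, so the balls $B(x_0,R)$ and $B(gx_0,R)$ must intersect and $d(gx_0,x_0)<2R$. Hence $Gx_0$ is bounded, and the Bruhat--Tits fixed point theorem (Corollary II.2.7 of \cite{bridson2013metric}) produces a $G$-fixed point in $X$, contradicting Lemma \ref{orbitasfinitas}. The main obstacle is the $\partial X$ case, where the non-atomicity of $\mu$ must be combined with the measure-cover split to push $F(x_m)$ to infinity; the $X$ case is a short ball-overlap argument and the easy direction is formal.
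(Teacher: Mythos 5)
Your proof is correct and follows essentially the same route as the paper, whose own proof simply handles the easy direction by Dirac masses and defers the hard direction to the arguments already given for Proposition \ref{reticulanofijaunpunto}, exactly as you reproduce them. Your ball-overlap argument for an invariant measure on $X$ is a slightly more direct variant of the paper's compact-exhaustion argument, but the substance is identical.
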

\begin{proof}
	The implication that has  not been discussed   can be proved by considering Dirac masses. 
\end{proof}
	\begin{lem}\label{estanuniformementecercanas}
		Let $\Gamma_1$ and $\Gamma_2$ be two uniform  lattices of a locally compact group $G$ and let $X\xrightarrow{f_i}Y$, $i=1,2$ be two continuous functions between $X$ a topological space and $Y$ a metric space. Suppose $G$ acts  transitively on $X$ with compact stabilizers, by isometries on $Y$ and orbitally continuously  on both. If $f_i$ is $\Gamma_i$-equivariant,  then there exists $C>0$ such that for every $x\in X$, $d(f_1(x),f_2(x))<C.$ 
	\end{lem}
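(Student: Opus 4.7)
The plan is to reduce the problem to compactness, using cocompactness of the lattices on one side and equivariance of the $f_i$ on the other.

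Fix a base point $x_{0}\in X$ and let $K=\mathrm{Stab}_{G}(x_{0})$, which is compact by hypothesis, so that $X\cong G/K$. Since each $\Gamma_{i}$ is uniform, one can choose a compact set $\widetilde{F}_{i}\subset G$ with $\Gamma_{i}\widetilde{F}_{i}=G$, and then $F_{i}=\widetilde{F}_{i}\cdot x_{0}\subset X$ is compact with $\Gamma_{i}F_{i}=X$. Given any $x=g\cdot x_{0}\in X$, decompose $g=\gamma_{1}\widetilde{d}_{1}=\gamma_{2}\widetilde{d}_{2}$ with $\gamma_{i}\in\Gamma_{i}$ and $\widetilde{d}_{i}\in\widetilde{F}_{i}$; set $y_{i}=\widetilde{d}_{i}\cdot x_{0}\in F_{i}$.

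Using $\Gamma_{i}$-equivariance of $f_{i}$, we get $f_{i}(x)=\gamma_{i}\cdot f_{i}(y_{i})$, so that applying the isometry $\gamma_{1}^{-1}$ yields
$$d\bigl(f_{1}(x),f_{2}(x)\bigr)=d\bigl(f_{1}(y_{1}),\,h\cdot f_{2}(y_{2})\bigr),\qquad h:=\gamma_{1}^{-1}\gamma_{2}=\widetilde{d}_{1}\widetilde{d}_{2}^{-1}.$$
The key point is that $h$ lies in the compact set $H:=\widetilde{F}_{1}\widetilde{F}_{2}^{-1}\subset G$, independently of $x$. Hence
$$\sup_{x\in X}d\bigl(f_{1}(x),f_{2}(x)\bigr)\;\leq\;\sup_{(y_{1},y_{2},h)\in F_{1}\times F_{2}\times H}d\bigl(f_{1}(y_{1}),\,h\cdot f_{2}(y_{2})\bigr).$$

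To conclude, one observes that this supremum is finite because it is taken over a compact set of a continuous function. The $G$-action on $Y$ is orbitally continuous and by isometries; for any $g_{n}\to g$ in $G$ and $y_{n}\to y$ in $Y$ one has
$$d(g_{n}y_{n},gy)\;\leq\;d(g_{n}y_{n},g_{n}y)+d(g_{n}y,gy)\;=\;d(y_{n},y)+d(g_{n}y,gy)\;\longrightarrow\;0,$$
so the action $G\times Y\to Y$ is jointly continuous. Combined with continuity of $f_{1},f_{2}$ and of the metric $d$, this gives the required continuity on $F_{1}\times F_{2}\times H$.

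The only mildly delicate step is the passage from orbital continuity to joint continuity of the action, which is needed to ensure that the displayed supremum over $F_{1}\times F_{2}\times H$ is actually finite; the rest of the argument is a bookkeeping exercise with fundamental domains and the equivariance of $f_{1}$ and $f_{2}$.
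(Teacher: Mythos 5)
Your argument is correct and is essentially the paper's own proof: both decompose an arbitrary point via cocompactness of the lattices, use $\Gamma_i$-equivariance and the isometric action to move everything back to a fixed compact set, and bound the displacement by a supremum of a continuous function over that compact set. The only cosmetic difference is that you decompose a single group element $g$ in two ways (so your compact set is $\widetilde{F}_1\widetilde{F}_2^{-1}$), whereas the paper decomposes the point itself and therefore picks up an extra $\mathrm{Stab}(x_0)$ factor; your explicit verification that orbital continuity plus isometries gives joint continuity is a welcome detail the paper leaves implicit.
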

	\begin{proof}
		There exist  compact sets $K_i\subset G$  such that $\Gamma_i K_i=G$ (see for example Lemma 2.46 in \cite{folland1995course}).   Fix $x_0\in X$ and take $y \in X$. There exist $\gamma_i\in \Gamma_i$ and $k_i\in K_i$, such that $\gamma_ik_i x_0=y.$
		Therefore, 
		$$\begin{array}{rcl}
			d(f_1(y),f_2(y))&=&	d\big(\gamma_1f_1(k_1x_0), \gamma_2f_2(k_2x_0)\big)\\
			&=&d\big(\gamma_2^{-1}\gamma_1f_1(k_1x_0),f_2(k_2x_0) \big)\\
			&\leq&\sup\{d\big(zf_1(l_1x_0),f_2(l_2x_0)\big)\mid z\in K_2^{}\text{Stab}(x_0)K_1^{-1}, l_i\in K_i\}. 
		\end{array}$$
	\end{proof}
	
	\begin{lem}\label{suficientesisometrias}
		Let $\{H_n\}_{n\in\N_{\geq 1}}$ be a sequence of finite-dimensional hyperbolic spaces embedded in $\hi_\R$, where for $n\geq2$,  $H_n$ is isometric to  $\hn_\R$ and  $H_1$ is a geodesic.  Suppose that  for every $n\geq 1$,  $H_n\subset H_{n+1}$ and $$
		\overline{\bigcup_{n\geq 1 }H_n}=\hi_\R.$$ 
		Therefore, for every $n\geq2$ and $y_1,y_2\in\hi_\R$, there exists $\varphi\in\iso$ such that, $\varphi|_{H_n}=Id$ and $\varphi(\{y_1,y_2\})\subset H_{n+2}$.
	\end{lem}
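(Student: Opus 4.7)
The plan is to pass to the linear model of $\hi_\R$ and reduce the statement to a standard extension argument for unitaries on a separable Hilbert space. Let $(H,B)$ be the Hilbert space of signature $(1,\infty)$ underlying $\hi_\R$, so that $\iso$ identifies with $PO(H,B)$. Each embedded hyperbolic subspace $H_k$ corresponds to a $B$-nondegenerate closed subspace $V_k\subset H$ of real dimension $k+1$ and signature $(1,k)$, and the nesting $H_n\subset H_{n+2}$ translates to $V_n\subset V_{n+2}$. Since $V_n$ has signature $(1,n)$ and $V_{n+2}$ signature $(1,n+2)$, the $B$-orthogonal decomposition $V_{n+2}=V_n\oplus W$ exhibits $W:=V_{n+2}\cap V_n^\perp$ as a two-dimensional, $B$-negative-definite subspace of $V_n^\perp$; the form $-B$ makes $V_n^\perp$ into a separable infinite-dimensional real Hilbert space.

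For $y_1,y_2\in\hi_\R$, I would pick lifts $v_1,v_2\in H$ and decompose $v_i=a_i+b_i$ with $a_i\in V_n$ and $b_i\in V_n^\perp$. Any linear map of the form $\mathrm{id}_{V_n}\oplus\psi$, with $\psi$ orthogonal on $V_n^\perp$, preserves $B$ and hence induces an element of $\iso$ that fixes $H_n$ pointwise; it sends $[v_i]$ into $H_{n+2}$ precisely when $\psi(b_i)\in W$. The problem therefore reduces to producing a unitary $\psi$ of the real Hilbert space $V_n^\perp$ that carries the (at most two-dimensional) subspace $U:=\mathrm{span}\{b_1,b_2\}$ into $W$.

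This last step is routine. Since $\dim U\le 2=\dim W$ and both inherit the same Euclidean inner product from $V_n^\perp$, there is a linear isometric embedding $\psi_0:U\to W$. The orthogonal complements $V_n^\perp\ominus U$ and $V_n^\perp\ominus\psi_0(U)$ are separable and infinite-dimensional, hence unitarily isomorphic; combining any such isomorphism with $\psi_0$ yields the desired unitary $\psi$ of $V_n^\perp$. The induced isometry $\varphi\in\iso$ then fixes $H_n$ pointwise and sends $\{y_1,y_2\}$ into the projectivization of $V_{n+2}$, namely $H_{n+2}$. The only obstacle worth mentioning is the linear-versus-projective bookkeeping, since a priori elements of $\iso$ determine linear automorphisms of $H$ only up to scalar, but because our goal is to fix $H_n$ pointwise we may normalize to make the linear representative the identity on $V_n$; apart from this, the argument is pure linear algebra on a separable infinite-dimensional Hilbert space.
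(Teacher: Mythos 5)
Your argument is correct, but it takes a different route from the paper. The paper first observes that there is a finite-dimensional totally geodesic $\mathbf{H}\cong\mathbf{H}^m_\R$ containing $y_1,y_2$ and $H_{n+2}$, notes that every isometry of such a subspace extends to $\hi_\R$, and thereby reduces the lemma to the analogous statement in $\mathbf{H}^m_\R$, which it declares clear. You instead work directly in the linear model: you identify $\iso$ with $PO(H,B)$, split $H=V_n\oplus V_n^\perp$, observe that $W=V_{n+2}\cap V_n^\perp$ is a two-dimensional positive-definite subspace of the Hilbert space $(V_n^\perp,-B)$, and build an orthogonal operator of $V_n^\perp$ carrying $\mathrm{span}\{b_1,b_2\}$ into $W$ by matching the two infinite-dimensional separable complements. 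This is essentially the linear algebra that the paper's ``the claim is clear'' conceals, so your version is more self-contained and in fact never needs the reduction to finite dimensions at all; the price is that you must justify the identification $\iso\cong PO(H,B)$ and the fact that $V_n^\perp$ is a closed negative-definite complement (both standard, e.g.\ in \cite{burger2005equivariant}). The paper's route is shorter on the page but leans on two unproved ingredients (the existence of the finite-dimensional hull $\mathbf{H}$ and the extension of its isometries), whereas yours replaces them with an explicit construction. One small point worth making explicit in your write-up: the containment $\varphi(\{y_1,y_2\})\subset H_{n+2}$ also requires $B(a_i+\psi(b_i),a_i+\psi(b_i))>0$, which is automatic since $\mathrm{id}_{V_n}\oplus\psi$ preserves $B$.
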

	\begin{proof}
		Given $H_n\subset H_{n+2}$ and $y_1,y_2\in\hi_\R$, there exists $m\geq n+2$ and $\mathbf{H}$ isometric to $\mathbf{H}^m_\R$ such that $y_i\in \mathbf{H}$ and  $H_{n+2}\subset \mathbf{H}.$ Observe that every isometry of $\mathbf{H}$ can be extended to an isometry of $\hi_\R.$  Therefore the problem can be reduced to a statement about $\mathbf{H}^{m}_\R$, where the claim  is clear. 
	\end{proof}
	Let $M$ be a Riemannian manifold and let $U\subset M$ be an open set contained in a chart $(V,\phi)$. Suppose  that $\overline{U}\subset V$ and $\phi^{-1}(U)=B(p, r)\subset \R^m$. For every $\varphi\in \mathcal{C}^0(\partial U)$,  there exists a unique $h_\varphi\in \mathcal{C}^0(\overline{U})\cap \mathcal{C}^2( U)$ which solves the Dirichlet problem, in other
	words,  $h_\varphi$  is harmonic in $U$ ($\Delta h_\varphi|_U=0$) and $h_\varphi|_{\partial U}=\varphi|_{\partial U}$ (see Lemma 6.10 in \cite{Gilbargtrudingerelliptic}). For references about harmonic maps in the Riemannian setting  see \cite{nishikawa}  and for harmonic maps with a CAT(0) codomain see \cite{korevaarsobolev} and \cite{korevaari1997global}. 
	
	For every $x\in U$, the claim is that  the map $$\begin{array}{rcl}
		\mathcal{C}^0(\partial U)&\rightarrow &\R\\
		\varphi&\mapsto&h_\varphi(x)\end{array}$$ is a positive linear functional, in other words, it 
	defines a probability measure $p^U_x$ in $\partial U$. 
	Indeed, in  every $U$ as above, a harmonic map defined on $\overline{U}$ achieves its maximum (minimum) in $\partial U$ and   if there exists $u\in U$ such that the maximum (minimum) of $h$ is achieved in $u$, then $h$ is constant in $\overline{U}$  (see Theorem 3.1 in \cite{Gilbargtrudingerelliptic}). Thus  $$h_\varphi \leq \max\limits_{y\in\partial U} h_\varphi(y)=\max\limits_{y\in\partial U} \varphi(y),$$  therefore the linear map $\varphi\mapsto h_\varphi (x)$ is positive and  continuous for every $x\in U$. 
	
	A continuous function $M\xrightarrow{\varphi}\R$ is called  \textit{subharmonic} if for every $U$ as above and every $x\in U$, $$\varphi(x)\leq\int\limits_{\partial U} \varphi d p^U_x.$$
	
	If $\varphi \in \mathcal{C}^2(M)$, then $\varphi$ is subharmonic if, and only if, $\Delta f\geq 0$ (see page 103 of \cite{Gilbargtrudingerelliptic}). 
	
	Observe that every non-constant subharmonic function $\varphi $ defined on $\overline U$ satisfies a maximum principle: the maximum of $\varphi$ is achieved only in the boundary. 
	
	\begin{lem}\label{uniformconvergenceofsubharmonic}
		Let $M$ be a Riemannian manifold and  let $(\varphi _n)$ be a sequence of subharmonic functions defined in $M$.  If $(\varphi_n)\to \varphi$ uniformly on compact sets, then $\varphi$ is subharmonic. 
		
	\end{lem}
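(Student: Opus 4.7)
The plan is to pass to the limit in the sub-mean-value inequality that defines subharmonicity. First I would note that $\varphi$ is automatically continuous: locally uniform convergence of continuous functions gives a continuous limit.

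Next, fix an open set $U$ of the type allowed in the definition of subharmonicity, i.e.\ $U$ is contained in a chart $(V,\phi)$ with $\overline{U}\subset V$ and $\phi^{-1}(U)=B(p,r)$. Under these hypotheses $\overline{U}$ is compact (it is the homeomorphic image under $\phi^{-1}$ of the closed ball $\overline{B(p,r)}$, which lies in $\phi(V)$ because $\overline{U}\subset V$). Consequently $\varphi_n\to\varphi$ uniformly on $\overline{U}$, and in particular uniformly on $\partial U$.

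Fix any $x\in U$. By hypothesis each $\varphi_n$ is subharmonic, so
$$\varphi_n(x)\leq \int_{\partial U}\varphi_n\,dp^U_x.$$
Since $p^U_x$ is a probability measure on $\partial U$ and $\varphi_n\to\varphi$ uniformly on $\partial U$, one has
$$\left|\int_{\partial U}\varphi_n\,dp^U_x-\int_{\partial U}\varphi\,dp^U_x\right|\leq \sup_{y\in\partial U}|\varphi_n(y)-\varphi(y)|\longrightarrow 0,$$
so the right-hand side converges to $\int_{\partial U}\varphi\,dp^U_x$. The left-hand side converges to $\varphi(x)$ since convergence is in particular pointwise. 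Taking $n\to\infty$ in the inequality therefore yields $\varphi(x)\leq\int_{\partial U}\varphi\,dp^U_x$, and this holds for every admissible $U$ and every $x\in U$, which is precisely the definition of subharmonicity.

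I do not anticipate any substantive obstacle; the only point requiring a moment of thought is the compactness of $\overline{U}$, which is needed to upgrade locally-uniform convergence to uniform convergence on $\partial U$, but this follows from the standing hypothesis $\overline{U}\subset V$ in the definition of the test sets used for subharmonicity.
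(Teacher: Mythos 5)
Your argument is correct, and it is the natural (essentially the only) way to prove the lemma given the paper's definition of subharmonicity via the measures $p^U_x$; the paper itself states this lemma without proof, so there is nothing to compare against beyond noting that your limit-passage in the sub-mean-value inequality, using that $p^U_x$ is a probability measure and that $\partial U$ is compact, is exactly what is needed. Your side remark about the compactness of $\overline{U}$ is well taken and is implicit in the paper's requirement that the Dirichlet problem be solvable on $\overline{U}$.
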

	
	The proof of the next lemma follows some of the ideas in  Theorem 2.3 in \cite{li1998harmonic}. 
	\begin{lem}\label{lemachinos}
		Let $X$ be a homogeneous and complete Riemannian manifold and  let   $$u,v :X\rightarrow \hi_\R$$ be two  harmonic and Lipschitz continuous functions of class $\mathcal{C}^2$. If there exists $C>0$  such that for every $x\in X$, $d(u(x),v(x))<C$, then either 
		$f=g$ or the images of $f$ and $g$ are contained in one geodesic.  
	\end{lem}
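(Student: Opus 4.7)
The plan is to adapt the Sampson--Eells strategy used by Li and Wang in \cite{li1998harmonic}: recast the separation between $u$ and $v$ as a bounded subharmonic function on $X$, exploit the homogeneity of $X$ to force a limit of this function to be constant, and finally extract the geometric conclusion from the equality case of the underlying Bochner identity.

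The starting point is a Sampson-type Bochner estimate: since $u,v$ are $\mathcal{C}^2$ harmonic with target of sectional curvature $\leq-1$, a direct computation of the Laplacian of $\varphi(x):=\cosh d(u(x),v(x))$ yields an inequality of the form
$$\Delta\varphi(x)\geq\varphi(x)\,Q(x),$$
where $Q\geq 0$ vanishes at $x$ iff $du(x)$ equals the parallel transport of $dv(x)$ along the geodesic from $u(x)$ to $v(x)$. In particular $\varphi$ is subharmonic, and the hypothesis $d(u,v)<C$ makes $\varphi$ bounded above by $\cosh(C)$; the Lipschitz property of $u,v$ makes $\varphi$ Lipschitz.

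The central step is to show $\varphi$ must saturate $M:=\sup_X\varphi$. Fix $x_0\in X$; by homogeneity, pick isometries $g_n$ of $X$ with $\varphi(g_nx_0)\to M$. The equi-Lipschitz uniformly bounded functions $\varphi_n:=\varphi\circ g_n$ admit, by Arzel\`{a}--Ascoli in $\R$, a locally uniform subsequential limit $\varphi_\infty$, which is subharmonic by Lemma \ref{uniformconvergenceofsubharmonic} and attains $M$ at $x_0$; the strong maximum principle then gives $\varphi_\infty\equiv M$. To transfer this to the underlying maps, pre-compose with isometries $\phi_n\in\iso$ normalizing $\phi_n(u(g_nx_0))$ at a fixed basepoint, so that $u_n:=\phi_n\circ u\circ g_n$ and $v_n:=\phi_n\circ v\circ g_n$ are equi-Lipschitz harmonic maps sending $x_0$ into a common bounded region. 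Using weak sequential compactness of bounded sets in the separable Hilbert space underlying $\hi_\R$ together with the Korevaar--Schoen regularity theory for harmonic maps into CAT(0) targets, extract a locally uniform subsequential limit $(u_\infty,v_\infty)$ of harmonic Lipschitz maps satisfying $\cosh d(u_\infty,v_\infty)\equiv M$.

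Saturation of the Bochner inequality at $(u_\infty,v_\infty)$ forces $Q_\infty\equiv 0$; integrating the resulting parallelism of $du_\infty$ and $dv_\infty$ along curves in $X$ places the images of $u_\infty$ and $v_\infty$ inside a common geodesic $\gamma_\infty\subset\hi_\R$, with $v_\infty$ the translate of $u_\infty$ by $\cosh^{-1}(M)$ along $\gamma_\infty$. Since containment of an image in a geodesic of $\hi_\R$ is expressible via four-point (Ptolemy-type) equalities that are invariant under $\iso$ and preserved under locally uniform limits, the rigidity descends from $(u_\infty,v_\infty)$ back to $(u\circ g_n,v\circ g_n)$ for large $n$, and since each $g_n$ is a bijection of $X$, back to $u$ and $v$ themselves. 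The dichotomy $M=1$ (giving $u=v$) versus $M>1$ (giving common geodesic containment) is the conclusion. The principal obstacle is the infinite dimension of the target, which obstructs a naive Arzel\`{a}--Ascoli argument for $u$ and $v$ directly and forces one through the weak-compactness/regularity detour above; a related subtlety is justifying carefully that the rigidity ``image contained in a geodesic'' is indeed closed under the type of convergence produced here.
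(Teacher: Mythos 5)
Your overall architecture is the same as the paper's (renormalize by domain isometries toward the supremum of the displacement, pass to a limit, use subharmonicity and the maximum principle, invoke the Li--Wang rigidity lemma), and the real-valued part of your argument is fine: $\cosh d(u,v)$, or just $d(u,v)$ as in the paper, is subharmonic because the convex function $d$ composed with the harmonic map $(u,v)$ is subharmonic, and Arzel\`{a}--Ascoli plus Lemma \ref{uniformconvergenceofsubharmonic} and the strong maximum principle do give a constant subharmonic limit function. But the step where you produce the limit \emph{maps} $u_\infty,v_\infty$ is genuinely broken. Weak sequential compactness of bounded sets in the Hilbert space underlying $\hi_\R$ does not yield locally uniform convergence, and weak limits do not preserve the bilinear form $B$: the weak limit of a bounded sequence of points of $\hi_\R$ need not lie in $\hi_\R$, and distances are at best lower semicontinuous, so the crucial identity $\cosh d(u_\infty,v_\infty)\equiv M$ cannot be extracted this way. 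Korevaar--Schoen regularity is irrelevant here; the obstruction is non-compactness of the target, not regularity of the maps. This is precisely where the paper spends most of its effort: it post-composes with isometries of $\hi_\R$ (Lemma \ref{suficientesisometrias}) chosen so that the images of a countable dense subset of $X$ land in an increasing chain of finite-dimensional hyperbolic subspaces $H_1\subset H_3\subset H_5\subset\cdots$, each new isometry fixing the previous subspace, and runs a diagonal argument; local compactness of each $H_k$ together with the uniform Lipschitz bound then gives honest strong, locally uniform convergence, and the limits extend to all of $X$ by equicontinuity.

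The second gap is the final ``descent'' step. Even granting limit maps whose images lie in a common geodesic, that is a property of the limit and does not transfer back to $(u\circ g_n, v\circ g_n)$ for large $n$: closedness of a condition under locally uniform limits runs in the wrong direction (a sequence of maps converging to a map with geodesic image need not individually have geodesic image --- think of $f+\tfrac1n g\to f$), and no Ptolemy-type identity fixes this. The paper never attempts such a descent. Instead, it uses the constancy of the limiting displacement to conclude that $d(u(\cdot),v(\cdot))$ is itself constant, equal to $\sup_X d(u(x),v(x))$, and then applies Lemma 2.2 of \cite{li1998harmonic} directly to the original pair $u,v$; the dichotomy $u=v$ versus common geodesic image is the conclusion of that lemma about $u$ and $v$ themselves, not something inferred from the rigidity of an auxiliary limit pair. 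To repair your argument you would need to replace both the weak-compactness extraction by a strong-convergence scheme of the above type and the descent step by an argument pinning down $d(u,v)$ itself.
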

	\begin{proof}
		Suppose that $K>0$ is a Lipschitz constant for $u$ and $v$. 	Let $\{y_i\}_{i\in\N}\subset\hi_\R$ be such that if for every $n\geq1$, $H_n$ is  the smallest hyperbolic space that contains    $\{y_0,\dots,y_n\}$,  then the family 
		$\{H_n\}_{n\geq1}$  satisfies the hypothesis of  Lemma \ref{suficientesisometrias}. 
		
		Let $(x_n)_{n\geq1}$ be a sequence in $X$ such that 
		$$d(u,v)=\sup_{x\in X} \{d(u(x),v(x))\} =\lim_{n\to\infty}d(u(x_n),v(x_n)).$$
		
		Fix $x_0\in X$ and for every $i$ choose $\varphi_i\in\iso$ such that $\varphi_i(x_0)=x_i$. Define $u_i=u\circ\varphi_i$ and $v_i=v\circ\varphi_i$. 
		For every $i$ there exist an  isometry $T_{i}^1$ such that $T_i^1\circ u_i(x_0)=y_0$ and $T_i^1\circ v_i(x_0)\in H_1$. Observe that for every $i$, 
		$$d(T_i^1 \circ u_i(x_0),T_i^1 \circ v_i(x_0))\leq d(u,v).$$
		$H_1$ is locally compact,  therefore there exists a  subsequence $\big(T_{1,i}^1\circ v_{1,i}(x_0)\big)_{i\in\N}$ of $\big(T_i^1\circ v_i(x_0)\big)_{i\in\N}$ which is convergent. 
		
		Let $\{z_i\}_{i\in\N_{\geq1}}$ be a dense subset of $X$. Observe that for every $i$,  there exists an isometry  $T_i^2$ such that $T_i^2|_{H_1}=Id$ and  $$\{T_i^2\circ T_{1,i}^1\circ u_{1,i}(z_1), T_i^2\circ T_{1,i}^1\circ v_{1,i}(z_1)\}\subset H_3.$$
		Notice that for every $i$, 
		$$d\Big(T_i^2\circ T_{1,i}^1\circ u_{1,i}(z_1), T_i^2\circ T_{1,i}^1\circ u_{1,i}(x_0)\Big)\leq Kd(z_1,x_0), $$
		but $$T_i^2\circ T_{1,i}^1\circ u_{1,i}(x_0)=T_{1,i}^1\circ u_{1,i}(x_0)=y_0.$$
		
		Therefore $$\Big(T_i^2\circ T_{1,i}^1\circ u_{1,i}(z_1)\Big)_{i\in \N_{\geq1}}$$ is a bounded sequence in $H_3$. Also,  for every $i$, 
		$$d\Big(T_i^2\circ T_{1,i}^1\circ u_{1,i}(z_1), T_i^2\circ T_{1,i}^1\circ v_{1,i}(z_1)\Big)\leq d(u,v).$$
		Thus, 
		$$\Big(T_i^2\circ T_{1,i}^1\circ v_{1,i}(z_1)\Big)_{i\geq1}$$
		is again a bounded sequence in $H_3$. So it is possible to chose respective subsequences, 
		$$\Big(T_{2,i}^2\circ T_{2,i}^1\circ u_{2,i}(z_1)\Big)_{i{\geq1}}$$
		and 
		$$\Big(T_{2,i}^2\circ T_{2,i}^1\circ v_{2,i}(z_1)\Big)_{i{\geq1}} $$
		that are convergent. 
		
		By induction on $n$, suppose that for every for every $2\leq m\leq n$ and for every $i\geq1$ there are isometries $T^m_{m,i}$, and $T^1_{n,i}$ such that 
		\begin{enumerate}
			\item $T^1_{n,i}\circ u_i(x_0)=y_0$ and $\big(T^1_{n,i}\circ v_i(x_0)\big)_{i\geq1}$ is a convergent sequence in $H_1$. 
			\item 
			$T^m_{n,i}|_{H_{1+2(m-2)}}=Id. $
			\item 
			$\Big(T^m_{n,i}\circ\cdots\circ T^1_{n,i}\circ u_{n,i}(z_{m-1})\Big)_{i\geq1}$ and 	  $\Big(T^m_{n,i}\circ\cdots\circ T^1_{n,i}\circ v_{n,i}(z_{m-1})\Big)_{i\geq1}$ are converging sequences in $H_{1+2(m-1)}$. 
		\end{enumerate}
		For every $i\geq1$, let  $T^{n+1}_i$ be an isometry with the following properties,
		\begin{enumerate}
			\item $T^{n+1}_i|_{H_{1+2(n+1-2)}}=Id$. 
			\item $T^{n+1}_i\circ\cdots\circ T_{n,i}^1\circ u_{n,i}(z_n)$ and $T^{n+1}_i\circ\cdots\circ T_{n,i}^1\circ v_{n,i}(z_n)$ are elements of $H_{1+2(n+1-1)}$. 
		\end{enumerate}
		Observe that  
		$$\Big(T^{n+1}_i\circ\cdots\circ T_{n,i}^1\circ u_{n,i}(z_n)\Big)_{i\geq1}$$
		is a bounded sequence in $H_{1+2(n+1-1)}$, indeed 
		$$d\Big(T^{n+1}_i\circ\cdots\circ T_{n,i}^1\circ u_{n,i}(z_n)\,,\,T^{n+1}_i\circ\cdots\circ T_{n,i}^1\circ u_{n,i}(x_0) \Big)\leq Kd(z_n,x_0),$$
		but 
		$$T^{n+1}_i\circ\cdots\circ T_{n,i}^1\circ u_{n,i}(x_0) =y_0.$$
		Moreover, for every $i$, 
		$$ d\Big(T^{n+1}_i\circ\cdots\circ T_{n,i}^1\circ u_{n,i}(z_n)\,,\,T^{n+1}_i\circ\cdots\circ T_{n,i}^1\circ v_{n,i}(z_{n}) \Big)\leq d(u,v).$$
		Therefore 
		$$\Big(T^{n+1}_i\circ\cdots\circ T_{n,i}^1\circ u_{n,i}(z_n)\Big)_{i\geq1}$$
		and 
		$$\Big(T^{n+1}_i\circ\cdots\circ T_{n,i}^1\circ v_{n,i}(z_n)\Big)_{i\geq1}$$ are bounded sequences in $H_{1+2(n+1-1)}$. 
		Hence  it is possible to choose convergent subsequences 
		$$\Big(T^{n+1}_{n+1,i}\circ\cdots\circ T_{n+1,i}^1\circ u_{n+1,i}(z_n)\Big)_{i\geq1}$$
		and 
		$$\Big(T^{n+1}_{n+1,i}\circ\cdots\circ T_{n+1,i}^1\circ v_{n+1,i}(z_n)\Big)_{i\geq1}.$$
		
		Define now, 
		$$U(z_n)=\lim\limits_{i\to\infty}T^{i}_{i,i}\circ\cdots\circ T_{i,i}^1\circ u_{i,i}(z_n)$$
		and 
		$$ V(z_n)=\lim\limits_{i\to\infty}T^{i}_{i,i}\circ\cdots\circ T_{i,i}^1\circ v_{i,i}(z_n).$$
		Observe that there exists $M>0$ such that,  
		$$\begin{array}{rcl}U(z_n)&=&\lim\limits_{i\to\infty}T^{i}_{i,i}\circ\cdots\circ T_{i,i}^1\circ u_{i,i}(z_n)\\&=&\lim\limits_{i\to\infty}T^{M}_{i,i}\circ\cdots\circ T_{i,i}^1\circ u_{i,i}(z_n)\\
			&=&\lim\limits_{i\to\infty}T^{M}_{M,i}\circ\cdots\circ T_{M,i}^1\circ u_{M,i}(z_n)\\
		\end{array}$$
		and $$V(z_n)=\lim\limits_{i\to\infty}T^{M}_{M,i}\circ\cdots\circ T_{M,i}^1\circ v_{M,i}(z_n).$$
		Given $z_n$ and $z_m$, there exists $M'>0$ such that 
		$$\begin{array}{rcl}
			d(U(z_n),U(z_m))
			&=&\lim\limits_{i\to\infty}d(u\circ \varphi_{M',i}(z_n), u\circ \varphi_{M',i}(z_m))\\
			&\leq&Kd(z_n,z_m),
		\end{array}$$
		and with the same reasoning, 
		$$d(V(z_n),V(z_m))\leq K(d(z_n,z_m)).$$
		Therefore  $U$ and $V$ can be extended to $X$. 
		
		For every $m\geq 1 $, define $$R_m= T^{m}_{m,m}\circ\cdots\circ T^1_{m,m}\circ u_{m,m}$$ and 
		$$S_m=T^{m}_{m,m}\circ\cdots\circ T^1_{m,m}\circ v_{m,m}.$$
		Observe that for every $m$,  $R_m$ and $S_m$ are Lipschitz continuous functions  with Lipschitz constant smaller or equal than $K$. Therefore $\{R_n\}_n
		$ and $\{S_n\}_n
		$ are equicontinuous families. If the function $L_n$ is defined as  $L_n(z)=d(R_n(z),S_n(z)), $
		then the family  $\{L_n\}_n$ is   equicontinuous  and pointwise convergent to $z\mapsto d(U(z),V(z))$, thus by Arzel\`{a}-Ascoli Theorem, the convergence is uniform on compact sets. 
		
		The functions $u$ and $v$ are $\mathcal{C}^2$, and for every $i$, $\varphi_i $ is an isometry, therefore $u_i$ and $v_i$ are harmonic functions (see for example Proposition 2.2 in  \cite{ishi1979}). Moreover,  for every $i,j$, the map $T_{i,i}^j$ is an isometry, therefore  for every $m$, the functions $R_m$ and $S_m$ defined above are harmonic. For one reference for the last statement see the corollary at the end of page 131 of \cite{Eellssampson}.  
		
		The distance function $ \hi_\R\times \hi_\R\xrightarrow{d}\R$ is a (geodesically) convex function and for every $m$,  the map $x\mapsto d(R_m(x), S_m(x))$ is  harmonic (see the second example in page 133 of \cite{Eellssampson}). 
		Therefore, for every $m$ the function  $L_m$ is subharmonic (see Theorem 3.4 in \cite{ishi1979}) and by Lemma \ref{uniformconvergenceofsubharmonic}, the map $z\mapsto d(U(z),V(z))$ is subharmonic.
		
		Notice that for every $z\in X$  $d(u,v)\geq d(U(z),V(z))$, also  $$\begin{array}{rcl}
			d(U(x_0),V(x_0))&=&\lim\limits_{m}d(T_m(x_0),S_m(x_0))\\
			&=&\lim\limits_{m}d(u_{m,m}(x_0),v_{m,m}(x_0))\\&=&\lim\limits_{m}d(u(x_{m,m}),v(x_{m,m}))=d(u,v). 
		\end{array}$$
	Therefore $d(U(z),V(z))$ is constant as a consequence  of the maximum principle for subharmonic maps. By construction, for every $z$, $$d(U(z), V(z))=d(u(z), v(z)),$$ hence, by Lemma 2.2 in \cite{li1998harmonic}, either $u=v$ or the images of $u$ and $v$ are contained in a geodesic. 
	\end{proof}

	\begin{lem}
		If $\Gamma$ is a torsion free uniform lattice of $SU(1,n )$,  then the following hold:
		\begin{enumerate}	
			\item All the non-trivial elements act as hyperbolic isometries of $\hn_\C$. 
			\item If $\textit{l}(g)$ is the translation length of $g$ acting as an  isometry of $\hn_\C$, then $\inf\{\textit{l}(\gamma)\mid \gamma\in \Gamma\setminus e\}>0$.
			\item There exists $g\in SU(1,n)$ such that $g\Gamma g^{-1}$ and $\Gamma $ are non-commensurable. 
		\end{enumerate}
	\end{lem}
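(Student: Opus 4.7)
The plan is to handle each claim with a geometric/discreteness argument, relying on the trichotomy of isometries of $\hn_\C$ into elliptic, parabolic, and hyperbolic types.

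For (1), an elliptic non-trivial $\gamma$ would lie in a compact point stabilizer of $SU(1,n)$, which intersected with the discrete group $\Gamma$ is finite, contradicting torsion-freeness. To rule out a parabolic $\gamma\in\Gamma$, I would pick $x_n\in\hn_\C$ with $d(x_n,\gamma x_n)\to 0$ and use cocompactness to find $\sigma_n\in\Gamma$ with $\sigma_n x_n$ in a fixed compact set $K$. The conjugates $\gamma_n:=\sigma_n\gamma\sigma_n^{-1}$ then move a point of $K$ by a vanishingly small amount; by proper discontinuity, the set of elements of $\Gamma$ moving a bounded point by a bounded amount is finite, so a subsequence is constant, $\gamma_n\equiv\delta$. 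Passing to a limit $\sigma_n x_n\to x_\infty\in K$ gives $\delta x_\infty=x_\infty$ by continuity, making $\delta$ elliptic --- but it is conjugate to the parabolic $\gamma$, a contradiction.

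For (2), suppose for contradiction that $\gamma_n\in\Gamma\setminus\{e\}$ satisfy $\textit{l}(\gamma_n)\to 0$. By (1) each $\gamma_n$ is hyperbolic, so $\textit{l}(\gamma_n)=d(x_n,\gamma_n x_n)$ for some $x_n$ on its axis. The same cocompactness-plus-proper-discontinuity argument as in (1) yields a subsequence $\sigma_n\gamma_n\sigma_n^{-1}\equiv\delta\in\Gamma$, for which $\textit{l}(\delta)=\lim \textit{l}(\gamma_n)=0$; but $\delta$ is conjugate to a non-trivial hyperbolic element, so $\textit{l}(\delta)>0$, a contradiction.

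For (3), I would show that the commensurator $\mathrm{Comm}_G(\Gamma):=\{g\in SU(1,n):g\Gamma g^{-1}\text{ is commensurable with }\Gamma\}$ is countable. Indeed, for $g\in\mathrm{Comm}_G(\Gamma)$ the subgroup $\Lambda:=\Gamma\cap g^{-1}\Gamma g$ has finite index in $\Gamma$, and $\phi:\lambda\mapsto g\lambda g^{-1}$ is a homomorphism $\Lambda\to\Gamma$. For fixed $(\Lambda,\phi)$, the set of $h\in G$ with $h\lambda h^{-1}=\phi(\lambda)$ for every $\lambda\in\Lambda$ is either empty or a coset of the $G$-centralizer of $\Lambda$; by Borel density (applied to $\Lambda$, itself a uniform lattice) this centralizer equals the finite center of $SU(1,n)$. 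There are countably many such $\Lambda$, and for each $\Lambda$ only countably many $\phi$, so $\mathrm{Comm}_G(\Gamma)$ is a countable union of finite sets. Since $SU(1,n)$ is uncountable, any $g$ outside $\mathrm{Comm}_G(\Gamma)$ works.

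The main obstacle is (3): unlike the first two claims, which follow purely from discreteness and cocompactness via a single conjugate-into-a-compact-set trick, (3) genuinely uses the algebraic structure of $\Gamma$ through Borel density (or, as an alternative, Margulis's commensurator theorem).
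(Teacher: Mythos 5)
Your proof is correct, and while parts (1) and (2) match the paper in substance, part (3) takes a genuinely different route. For (1) and (2) the paper simply cites Proposition II.6.10 of \cite{bridson2013metric} (every element of a group acting properly and cocompactly on a CAT(0) space is semisimple, and the set of translation lengths is bounded away from zero) and adds, exactly as you do, that torsion-freeness rules out non-trivial elliptic elements; your conjugate-into-a-compact-set argument is precisely the standard proof of that cited proposition, so here you have only unpacked the reference. For (3) the paper argues at infinity: by (1) every non-trivial $\gamma\in\Gamma$ is hyperbolic with a unique axis, so the set $X\subset\partial\hn_\C$ of boundary points fixed by some element of $\Gamma\setminus\{e\}$ is countable; choosing $g$ with $g\cdot x\notin X$ for some $x\in X$, the element $\theta=g\gamma g^{-1}\in g\Gamma g^{-1}$ fixes $g\cdot x$, and since every power $\theta^n$ shares the axis of $\theta$, no non-trivial power of $\theta$ can lie in $\Gamma$, so the two lattices are non-commensurable. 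You instead bound the full commensurator, parametrizing its elements by pairs $(\Lambda,\phi)$ and using Borel density to collapse each fibre to a coset of the finite centre. Both are countability arguments; yours yields the stronger conclusion that all but countably many $g$ work and does not rely on the axis geometry, at the price of invoking Borel density (which the paper reserves for the later lemma on the density of the group generated by two non-commensurable lattices), whereas the paper's version stays entirely inside the CAT($-1$) geometry already established in part (1). The one point you should make explicit is that $\Gamma$, being a uniform lattice, is finitely generated, so that there are only countably many finite-index subgroups $\Lambda$ and, for each, only countably many homomorphisms $\Lambda\to\Gamma$; the paper records the same finite generation fact for its own count.
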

	\begin{proof}
		For 1) and 2) see Proposition II.6.10 in \cite{bridson2013metric} and  observe that if $g\in \Gamma \setminus e$ acts as an elliptic isometry, then it is contained in a compact (finite) subgroup of $\Gamma$ and this cannot be the case. 
		
		For 3) observe that every  $\gamma\in\Gamma\setminus e$ preserves a unique axis in $\hn_\C$ and that $\Gamma$ is finitely generated (see Theorem 6.15 and Remark 6.18 in \cite{raghunathan1972discrete}). Define 
		$$X=\{\xi\in\partial\hn_\C\mid \gamma\cdot \xi=\xi\, \text{for some}\,\, \gamma\in\Gamma\}.$$  Let $x\in X$ and $g\in  SU(1,n)$ be such that $g\cdot x \not\in X$. This is possible because $X$ is countable.   The claim is that $g\Gamma g^{-1}$ and $\Gamma$ are not commensurable. Indeed,  $g\cdot x$ is fixed  by some
		$\theta\in g\Gamma g^{-1}$, but for every $n$, $\theta $ and $\theta^n$ share the axis, therefore  the two lattices cannot be commensurable.  
	\end{proof}
	The existence of uniform lattices  in connected, non compact and semisimple groups is due to Borel, for one reference see Chapter XIV in \cite{raghunathan1972discrete}. Any of these lattices is finitely generated and as a consequence of Selberg's Lemma (see \cite{alperinselberg}) they are also virtually torsion free. This two facts together with the previous observations show that there exist $\Gamma_1$ and $ \Gamma_2$,  non-commensurable uniform  lattices in $SU(1,n)$. 

	Following  \cite{furstenbergnoteonborel}, a pair $(G,H)$ is called a \textit{Borel pair} if $G$ does not admit non-trivial homomorphisms to a compact group, $H$ is a closed subgroup and $G/H$ admits a finite $G$-invariant measure. 
	In this article  the author showed that if $(G,H)$ is a Borel pair, where $G$ is a connected   real algebraic group, then $H$ is Zariski dense in $G$ (see Corollary 4 in \cite{furstenbergnoteonborel}).
	
	\begin{lem}\label{generanalgodenso}Given two non-commensurable  lattices $\Gamma_1$ and $\Gamma_2$ of $SU(1,n)$ (or any connected real semisimple  linear algebraic group without compact factors), 
		the group $H$ generated by $\Gamma_1\cup \Gamma_2 $ is dense in $SU(1,n)$.\end{lem}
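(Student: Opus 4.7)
The plan is to let $L := \overline{H}$ denote the topological closure of $H$ in $SU(1,n)$. By Cartan's closed subgroup theorem $L$ is a Lie subgroup, and the goal is to prove $L = SU(1,n)$. I would set this up as a dichotomy on the Lie algebra $\mathfrak{l}$ of $L$: show that $\mathfrak{l}$ is an ideal of the simple Lie algebra $\mathfrak{su}(1,n)$, so that $\mathfrak{l} \in \{0, \mathfrak{su}(1,n)\}$. The full case forces $L$ to be open in the connected group $SU(1,n)$, hence equal to it; the zero case will be ruled out via non-commensurability.

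To obtain the ideal, I would apply Corollary 4 of \cite{furstenbergnoteonborel} to the Borel pair $(SU(1,n), \Gamma_1)$: the ambient group is simple and non-compact, so it admits no non-trivial continuous homomorphism to a compact group, and $\Gamma_1$ is a lattice, so $\Gamma_1$ --- and a fortiori $L \supseteq \Gamma_1$ --- is Zariski dense in $SU(1,n)$. The identity component $L^0$ is normal in $L$, so $L$ normalizes $\mathfrak{l}$; since the normalizer in $SU(1,n)$ of a Lie subalgebra is Zariski closed, the Zariski closure of $L$ --- namely all of $SU(1,n)$ --- also normalizes $\mathfrak{l}$. This is exactly the statement that $\mathfrak{l}$ is an ideal.

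To exclude $\mathfrak{l} = 0$: if $L$ were discrete, then since $\Gamma_1 \subset L$ is a lattice, a standard covolume argument using Fubini and counting measure on the discrete fibers of $SU(1,n)/\Gamma_1 \to SU(1,n)/L$ would force $[L : \Gamma_1] < \infty$. The same reasoning with $\Gamma_2 \subset L$ yields $[L : \Gamma_2] < \infty$, so $\Gamma_1 \cap \Gamma_2$ would have finite index in both $\Gamma_i$, contradicting the non-commensurability hypothesis. I expect the main conceptual content to lie precisely here: non-commensurability is exactly the obstruction to $\overline{H}$ being a discrete finite extension of $\Gamma_1$, and once this possibility is removed the Lie-algebra dichotomy leaves $L = SU(1,n)$ as the only option.
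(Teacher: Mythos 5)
Your proposal is correct and follows essentially the same route as the paper: both invoke Furstenberg's Corollary 4 to get Zariski density of the lattice (hence of $\overline{H}$), deduce that the Lie algebra of $\overline{H}$ is $\mathrm{Ad}$-invariant under all of $SU(1,n)$ because the relevant action is Zariski continuous, use simplicity to reduce to the two extreme cases, and kill the discrete case by noting that both $\Gamma_i$ would then have finite index in $\overline{H}$, contradicting non-commensurability. The only cosmetic difference is that you phrase the finite-index step as a covolume/Fubini argument where the paper cites Lemma 1.6 of Raghunathan.
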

	
	\begin{proof} Observe that $\overline{H}$, the closure of $H$ for the usual topology,  is Zariski dense in $SU(1,n)$. Consider $\mathfrak{h}$ the Lie subalgebra of $\overline{H}$. This space is invariant under the action of ${H}$, therefore it is $SU(1,n)$-invariant because the action is Zariski continuous. This means that  $\overline{H}_0$ is a normal subgroup of $SU(1,n)$, but $SU(1,n)$ is simple. 
		 Suppose $\overline{H}_o$ is the  trivial group. 		
	Observe that $\overline{H}/\Gamma_i$ carries
		a finite invariant measure (see Lemma 1.6 in \cite{raghunathan1972discrete}), therefore  $\Gamma_1$ and $\Gamma_2$ have finite index in $\overline{H}$. This implies that $\Gamma_1$ and $ \Gamma_2 $ are commensurable, which is a contradiction. 
	\end{proof}

	Let $SU(1,n)\xrightarrow{\phi}Isom(\hn_\C)$ be the projectivization map.  This is a surjective homomorphism onto $Isom_\C(\hn_\C)$,  the group of holomorphic isometries of $\hn_c$. The map $\phi$ has finite kernel, therefore if $\Gamma_1$ and $\Gamma_2$ are as above, $\phi(\Gamma_1)$ and $\phi(\Gamma_2)$ are two uniform non-commensurable lattices of $Isom_\C(\hn_\C)$. Indeed, observe that $\Gamma_i\cdot \ker(\phi)$ is closed and countable (discrete), therefore there is $U$ an open subset of  $ SU(1,n)$ such that $U\cap(\Gamma_i\cdot ker(\phi) )=\{e\}.$ This shows that 
	$\phi(\Gamma_i)$ is a discrete subgroup of $Isom_\C(\hn_\C)$. 
	For the existence of a finite $\phi(SU(1,n))$-invariant measure observe that there is a natural continuous $G$-equivariant  bijection $$SU(1,n)/\Gamma_i\rightarrow \phi(SU(1,n))/\phi(\Gamma_i)$$ where the domain is compact. 
	The lattices $\phi({\Gamma_1})$ and $\phi(\Gamma_2)$ are not commensurable because $ker(\phi)$ is finite. The group generated by $\phi(\Gamma_1)$ and $\phi(\Gamma_2)$ is dense because $\Gamma_1$ and $\Gamma_2$ generate a dense subgroup of $SU(1,n)$.  
	
	\begin{teo}
		For $n\geq2, $ the group of holomorphic isometries of the complex hyperbolic space of dimension $n$,  $Isom_\C(\hn_\C)$, does not admit non-elementary representations into $Isom(\hi_\R)$, the group of isometries of the infinite-dimensional real hyperbolic space.
	\end{teo}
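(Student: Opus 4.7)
The plan is to argue by contradiction along the Carlson--Toledo--Py--Delzant route sketched in the introduction. Suppose a non-elementary representation $\rho \colon Isom_\C(\hn_\C) \to Isom(\hi_\R)$ exists, and fix the two torsion-free, non-commensurable uniform lattices $\phi(\Gamma_1), \phi(\Gamma_2)$ of $G := Isom_\C(\hn_\C)$ constructed in the preceding paragraphs, whose union generates a dense subgroup of $G$. Proposition \ref{reticulanofijaunpunto} then promotes each restriction $\rho|_{\phi(\Gamma_i)}$ to a non-elementary representation of the lattice.

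Next I would invoke the Korevaar--Schoen existence theorem for equivariant harmonic maps into a complete CAT$(-1)$ target to produce, for each $i$, a smooth $\phi(\Gamma_i)$-equivariant harmonic map $f_i \colon \hn_\C \to \hi_\R$; cocompactness of the $\phi(\Gamma_i)$-action automatically makes $f_i$ Lipschitz. Sampson's rank restriction for harmonic maps from a K\"ahler manifold into a Riemannian target of negative Hermitian curvature (see \cite{sampson}) caps the rank of each $f_i$ by $2$ at every point, so that locally the image lies in a totally geodesic real hyperbolic plane of $\hi_\R$. Lemma \ref{estanuniformementecercanas} then supplies a uniform bound $d(f_1(x),f_2(x)) \leq C$ for all $x \in \hn_\C$, after which Lemma \ref{lemachinos} forces the dichotomy: either $f_1 = f_2$, or the images of both $f_1$ and $f_2$ are contained in a single geodesic $\ell \subset \hi_\R$.

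In the geodesic branch, $\rho(\phi(\Gamma_i))$ preserves the image of $f_i$; when that image is unbounded in $\ell$ it preserves $\ell$ setwise, making $\rho|_{\phi(\Gamma_i)}$ elementary by Lemma \ref{orbitasfinitas}, whereas when it is bounded, equivariance together with cocompactness yields a bounded $\rho(\phi(\Gamma_i))$-orbit and hence a $\rho(\phi(\Gamma_i))$-fixed point via the standard CAT$(0)$ circumcenter construction; both outcomes contradict Step~1. If instead $f_1 = f_2 =: f$, then $f$ is equivariant under the dense subgroup of $G$ generated by $\phi(\Gamma_1) \cup \phi(\Gamma_2)$, and orbital continuity of both sides of the equivariance identity promotes this to full $G$-equivariance. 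Since $G$ acts transitively on $\hn_\C$, the image of $f$ is a single $G$-orbit of dimension at most $2$, so we obtain a nontrivial continuous homomorphism from the simple Lie group $G = PU(1,n)$, of real dimension $n(n+2) \geq 8$, into $Isom(\mathbf{H}^2_\R)$, a Lie group of dimension $3$---impossible because $\mathfrak{su}(1,n)$ is a simple Lie algebra, which forces the homomorphism to be trivial and hence $f$ to be constant, again contradicting the non-elementarity of $\rho$.

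The hardest part I anticipate is the second step: producing a smooth $\phi(\Gamma_i)$-equivariant harmonic map into the infinite-dimensional, non-locally-compact target $\hi_\R$, and verifying that Sampson's rank bound survives in this setting. This is precisely where the techniques of \cite{CarlsonyToledo} and \cite{pydelzant} must be adapted to the present infinite-dimensional context; the remaining steps are comparatively soft and rest on the structural lemmas already established in the paper.
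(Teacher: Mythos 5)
Your overall architecture matches the paper's proof almost step for step: non-elementarity of the lattice restrictions via Proposition \ref{reticulanofijaunpunto}, existence of smooth equivariant harmonic Lipschitz maps via Korevaar--Schoen and Py--Delzant, the uniform closeness from Lemma \ref{estanuniformementecercanas}, the rigidity dichotomy of Lemma \ref{lemachinos}, promotion to $Isom_\C(\hn_\C)$-equivariance by density, and Sampson's rank bound. You even treat the geodesic branch of Lemma \ref{lemachinos} explicitly (it forces an invariant geodesic or a fixed point, contradicting non-elementarity), which the paper leaves implicit. However, your final step contains a genuine gap. From the pointwise bound $\mathrm{rank}(df_x)\leq 2$ you cannot conclude that the image of $f$ lies, even locally, in a totally geodesic copy of $\mathbf{H}^2_\R$: the $2$-dimensional tangent planes $df_x(T_x\hn_\C)$ vary with $x$, the image need not be a submanifold, and nothing forces it to be totally geodesic or globally contained in a single $\mathbf{H}^2_\R$ preserved by $\rho(G)$. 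Consequently the continuous homomorphism $G\to Isom(\mathbf{H}^2_\R)$ whose nonexistence you invoke is never actually produced, and the dimension-count contradiction does not get off the ground.

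The paper closes the argument differently, and more cheaply. Equivariance gives, for $g\in Stab(x)$, the identity $df_x\circ dg_x = d\rho(g)_{f(x)}\circ df_x$, so $\ker df_x$ is a $Stab(x)$-invariant linear subspace of $T_x\hn_\C\cong\C^n$. The rank bound makes this kernel nontrivial (since $2 < 2n$ for $n\geq 2$), while $Stab(x)$ contains a copy of $U(n)$ acting transitively on the unit sphere of $T_x\hn_\C$, so the only invariant subspaces are $\{0\}$ and the whole tangent space. Hence $df_x=0$ for every $x$, the map $f$ is constant, and $\rho$ fixes a point --- the desired contradiction. If you replace your homomorphism-to-$Isom(\mathbf{H}^2_\R)$ step with this stabilizer argument (or supply the missing proof that the image is a $2$-dimensional totally geodesic $G$-invariant subspace, which is substantially harder), your proof is complete.
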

	\begin{proof}
		Let  $\rho$ is a non-elementary representation,  given a uniform lattice $\Gamma$ of $Isom_\C(\hn_\C)$, the restriction of $\rho$ to $\Gamma$ is non-elementary.  Therefore there  exists a $\Gamma$-equivariant, harmonic and Lipschitz continuous  map $\hn_\C\xrightarrow{u}\hi_\R$ (see Theorem 2.3.1 of \cite{korevaari1997global}). In Section 3.2 of \cite{pydelzant}, the authors showed that this map is  $\mathcal{C}^\infty$. 
		
		Given $\Gamma_1$ and $\Gamma_2$ two non-commensurable and uniform lattices of $Isom_\C(\hn_\C)$, there are   $\mathcal{C}^2$, harmonic, Lipschitz and $\Gamma_i$-equivariant functions, $\hn_\C\xrightarrow{u_i}\hi_\R$. Therefore it follows from Lemmas \ref{estanuniformementecercanas} and \ref{lemachinos} that $u_1=u_2$. This implies that the function $u=u_i$ is  $Isom_\C(\hn_\C)$-equivariant.	In Proposition 8 of  \cite{pydelzant}, the authors showed that the real rank of $u$ is at most 2. The arguments used there go back to the work of Sampson (see \cite{sampson}). If $x\in \hn_\C$, the kernel of $df_x$ is nontrivial. The group $Stab(x)$ acts transitively in spheres of the tangent space of $x$ and $u$ is $Isom_\C(\hn_\C)$-equivariant, therefore  $u$ is constant, but this is a contradiction. 
		
		\subsection*{Acknowledgments}
		I would like to thank Nicolas Monod for suggesting me the main question that this paper answers, as well as for the countless fruitful and enlightening discussions. Without his wisdom it would have been impossible for me to complete this text.  I would also like to thank Pierre Py for suggesting the way to solve this problem as well as for clarifying so many questions that arose in the process of creating this article. But above all for all his generosity during my stay in Strasbourg in the fall of 2021, without which this article would not have been possible. 
	\end{proof}
	
	\bibliographystyle{plain}
	\bibliography{biblio}
	
\end{document}